\newcommand{\UP}{\blacktriangle}                       
\newcommand{\DOWN}{\blacktriangledown}                        
\numberwithin{equation}{section}
\theoremstyle{plain}
\newtheorem{theorem}{Theorem}[section]
\newtheorem{proposition}[theorem]{Proposition}
\newtheorem{lemma}[theorem]{Lemma}
\newtheorem{corollary}[theorem]{Corollary}
\theoremstyle{definition}
\newtheorem{example}[theorem]{Example}
\newtheorem{remark}[theorem]{Remark}
\begin{document}
 
\title{Tolerances induced by irredundant coverings}

\author[J.~J{\"a}rvinen]{Jouni J{\"a}rvinen}
\address{J.~J{\"a}rvinen, Sirkankuja 1, 20810~Turku, Finland}
\email{Jouni.Kalervo.Jarvinen@gmail.com}
\urladdr{\url{http://sites.google.com/site/jounikalervojarvinen/}}

\author[S.~Radeleczki]{S{\'a}ndor Radeleczki}
\thanks{Acknowledgement:  The research of the second author was carried out as
part of the TAMOP-4.2.1.B-10/2/KONV-2010-0001 project supported by the
European Union, co-financed by the European Social Fund.}
\address{S.~Radeleczki, Institute of Mathematics\\ 
University of Miskolc\\3515~Miskolc-Egyetemv{\'a}ros\\Hungary}
\email{matradi@uni-miskolc.hu}
\urladdr{\url{http://www.uni-miskolc.hu/~matradi/}}

\begin{abstract}
In this paper, we consider tolerances induced by irredundant coverings. Each tolerance $R$ on $U$ determines a quasiorder $\lesssim_R$ by setting
$x \lesssim_R y$ if and only if $R(x) \subseteq R(y)$.  We prove that for a tolerance $R$ induced by a covering $\mathcal{H}$ of $U$, 
the covering $\mathcal{H}$ is irredundant if and only if the quasiordered set $(U, \lesssim_R)$ is bounded by minimal elements 
and the tolerance $R$ coincides with the product ${\gtrsim_R} \circ {\lesssim_R}$. We also show that in such a case 
$\mathcal{H} = \{ {\uparrow}m \mid \text{$m$ is minimal in $(U,\lesssim_R)$} \}$, and for each minimal $m$, we have $R(m) = {\uparrow} m$. 
Additionally, this irredundant covering $\mathcal{H}$ inducing $R$ consists of some blocks of the tolerance $R$. 
We give necessary and sufficient conditions under which $\mathcal{H}$ and the set of $R$-blocks coincide. These results are established by applying the 
notion of Helly numbers of quasiordered sets.
\end{abstract}

\maketitle

\section{Introduction and motivation} \label{Sec:Introduction}

In this paper, we consider tolerances (reflexive and symmetric binary relations) and, in particular, 
tolerances induced by irredundant coverings. The term \emph{tolerance relation} was introduced
in the context of visual perception theory by E.~C.~Zeeman \cite{Zeeman62}, 
motivated by the fact that indistinguishability of ``points'' in the visual 
world is limited by the discreteness of retinal receptors. 

Let $R$ be a tolerance on $U$ and suppose that $R$ is interpreted as a similarity relation such that $x \, R \, y$
means that $x$ and $y$ are similar in terms of the knowledge we have about them. Of course, 
each object is similar to itself, and if $x$ is similar to $y$, it is natural to assume that
$y$ is similar to $x$. Similarity relations are not necessarily transitive: we may have a finite sequence of objects 
$x_1, x_2, \ldots, x_n$ such that each  two consecutive objects $x_i$ and $x_{i+1}$ are similar, but $x_1$ and
$x_n$ are very different from each other.

Next, we recall some elementary notions and facts about rough set theory and formal concept analysis
from \cite{Jarv07} and \cite{ganter1999formal}, respectively.
Let $R$ be a tolerance on $U$. The \emph{upper $R$-approximation} of a set $X\subseteq U$ is 
\begin{align*}
 & X^\UP = \{x \in U \mid R(x) \cap X \neq \emptyset\} \\
\intertext{and the \emph{lower $R$-approximation} of $X$ is}
& X^\DOWN = \{x \in U \mid R(x) \subseteq X\}.
\end{align*}
Here  $R(x) = \{ y \in U \mid x \, R \, y$\} denotes the \emph{$R$-neighbourhood} of the element $x$.
Then, $X^\UP$ can be viewed as the set of elements that possible are in $X$, because in $X$ is at least
one element similar to them. The set $X^\DOWN$ can be considered as the set of elements certainly in $X$,
because for these elements, all elements similar to them are in $X$. 

Let us denote $\wp(U)^\DOWN = \{X^\DOWN \mid X \subseteq U\}$ and $\wp(U)^\UP = \{X^\UP \mid X\subseteq U\}$.
The complete lattices $(\wp(U)^\UP,\subseteq)$ and $(\wp(U)^\DOWN,\subseteq)$ are ortholattices. 
In $\wp(U)^\UP$, the orthocomplementation is $^\bot \colon X^\UP \mapsto X^{\UP c \UP}$,
and the map $^\top \colon X^\DOWN \mapsto X^{\DOWN c \DOWN}$ is the orthocomplementation operation of $\wp(U)^\DOWN$,
where for any $A \subseteq U$, $A^c = U \setminus A$ is the set-theoretical complement of $A$.

A \emph{formal context} $\mathbb{K} = (G,M,I)$ consists of two
sets $G$ and $M$, and a relation $I$ from $G$ to $M$. The elements of $G$ are 
called the \emph{objects} and the elements of $M$ are called \emph{attributes} of
the context $\mathbb{K}$. For $A \subseteq G$ and $B \subseteq M$, we define
$A' = \{ m \in M \mid g \, I \, m \text{ for all } g \in A\}$ and 
$B' = \{ g \in G \mid g \, I \, m \text{ for all } m \in B\}$.
A \emph{formal concept} of the context $(G,M,I)$ is a pair $(A,B)$ with
$A \subseteq G$, $B \subseteq M$, $A' = B$, and $B' = A$. 
We call $A$ the \emph{extent} and $B$ the \emph{intent} of the concept $(A,B)$.
The set of all concepts of the context $\mathbb{K} = (G,M,I)$ is denoted 
by $\mathfrak{B}(\mathbb{K})$. 
The set $\mathfrak{B}(\mathbb{K})$ is ordered by
\[
 (A_1,B_1) \leq (A_2,B_2) \iff A_1 \subseteq A_2 \iff B_1 \supseteq B_2.
\]
With respect to this order, $\mathfrak{B}(\mathbb{K})$ forms a complete lattice,
called the \emph{concept lattice} of the context $\mathbb{K}$.
If $I\subseteq U\times U$ is an irreflexive and symmetric relation, then
the concept lattice $\mathfrak{B}(\mathbb{K})$ corresponding to the
context $\mathbb{K}=(U,U,I)$ is a complete ortholattice. 

If $R$ is a tolerance relation on $U$,
then its complement $R^c = U \times U \setminus R$ is irreflexive and symmetric. 
Thus, the concept lattice $\mathfrak{B}(\mathbb{K})$ of the context $\mathbb{K} = (U,U,R^c)$ is a complete 
ortholattice. We proved in \cite{JarvRade2014} that 
\[ 
  \mathfrak{B}(\mathbb{K}) = \{ (A,A^\top) \mid A \in \wp(U)^\DOWN \},
\]
where $^\top$ is the orthocomplement defined in $\wp(U)^\DOWN$. Additionally,
for $(A,A^\top) \in \mathfrak{B}(\mathbb{K})$, its orthocomplement is $(A^\top, A)$. As noted in
\cite{JarvRade2014}, for any tolerance $R$, the complete ortholattices
$\wp(U)^\DOWN$, $\wp(U)^\UP$, and $\mathfrak{B}(\mathbb{K})$ are isomorphic. 

Let $\mathcal{L} = (L,\leq)$ be a lattice with a least element $0$. 
The lattice $\mathcal{L}$ is \emph{atomistic}, if any element of $L$ is the join of atoms below it. 
We proved in \cite{JarvRade2014} that for any tolerance $R$ on $U$, the following conditions are equivalent:
\begin{enumerate}[({C}1)]
 \item $R$ is induced by an irredundant covering;
 \item $\wp(U)^\DOWN$ and $\wp(U)^\UP$ are atomistic complete Boolean lattices;
 \item $\mathfrak{B}({\mathbb{K}})$ is an atomistic complete Boolean lattice.
\end{enumerate} 
This means that knowing when $R$ is induced by an irredundant covering is important, because this fully
characterises the case when the lattices $\wp(U)^\DOWN$, $\wp(U)^\UP$, and $\mathfrak{B}({\mathbb{K}})$ are atomistic 
complete Boolean lattices. Note also that if $R$ is a tolerance on $U$ induced by an irredundant covering, then
the set of atoms of $\wp(U)^\UP$ is $\{R(x) \mid \text{$R(x)$ is a block}\}$.
Because the lattice-join in $\wp(U)^\UP$ is the set-theoretical union, each element of $\wp(U)^\UP$ can be
given as a union of some $R$-neighbourhoods forming a block. Notice that the basic notions, such as coverings and blocks, 
related to tolerances are formally defined in Section~\ref{Sec:Preliminaries}.

In this paper, we continue the study of tolerances induced by an irredundant covering.
Our current study is done in the setting of quasiordered sets. Clearly, each tolerance $R$ on $U$ defines a quasiorder by
$x \lesssim_R y$ if and only if $R(x) \subseteq R(y)$. We show that for a tolerance $R$ induced by a covering $\mathcal{H}$ of $U$, 
the covering $\mathcal{H}$ is irredundant if and only if the quasiordered set $(U, \lesssim_R)$ is bounded by minimal elements 
and the tolerance $R$ coincides with the product ${\gtrsim_R} \circ {\lesssim_R}$. We also characterise the case when 
this $\mathcal{H}$ and the set of $R$-blocks coincide. 

This paper is structured as follows. Section~\ref{Sec:Preliminaries} recalls shortly the basic facts about tolerances.
In Section~\ref{Sec:Quasiorders}, we consider the relationship between tolerances and quasiorders, and we
characterise the case in which a tolerance is induced by an irredundant covering in terms of minimal elements
of a quasiordered set. We adopt from combinatorics the notion of Helly numbers in Section~\ref{Sec:Helly}, and we 
connect Helly numbers of quasiordered sets and blocks of a tolerance forming an irredundant covering. In particular, by using this result
for a tolerance $R$ induced by an irredundant covering $\mathcal{H}$, we give necessary and sufficient conditions
for $\mathcal{H} = \mathcal{B}(R)$, that is, for normality of the covering $\mathcal{H}$. The section ends with a consideration
of tolerances induced by finite distributive lattices.

\section{Preliminaries} \label{Sec:Preliminaries}
 
Let us first recall some notions and basic results from  \cite{Bartol2004,Schreider75}.
Let $R$ be a tolerance on $U$, that is, $R$ is a reflexive and symmetric binary relation on $U$. A nonempty subset
$X$ of $U$ is an \emph{$R$-preblock} if $X^2 \subseteq R$. An \emph{$R$-block} is a maximal $R$-preblock,
that is, an $R$-preblock $B$ is an $R$-block if $B \subseteq X$ implies $B = X$ for any $R$-preblock $X$. 
Thus, any subset $\emptyset \neq X \subseteq U$ is an $R$-preblock if and only if it is contained in some $R$-block.
The family of all $R$-blocks is denoted by $\mathcal{B}(R)$. If there is no danger of confusion, we simply call $R$-blocks as blocks. 
Each tolerance $R$ is completely determined by its blocks, that is, $a \, R \, b$ if and only if there exists a block 
$B \in \mathcal{B}(R)$ such that $a,b \in B$. This can be written in the form 
$R = \bigcup \{ B^2 \mid B \in \mathcal{B}(R)\}$.
It is also known that $B$ is a block if and only if 
\[  B = \{x \in U \mid B \subseteq R(x)\}. \]

We recall from \cite{Chajda91, Pogonowski81} the following lemma combining some conditions for an $R$-neighbourhood forming a block.

\begin{lemma} \label{Lem:ToleranceBlocks}
Let $R$ be a tolerance on $U$ and $x \in U$. Then, the following are equivalent:
\begin{enumerate}[\rm (a)]
 \item $R(x)$ is a block;
 \item $a \, R \, b$ for all $a,b \in R(x)$;
 \item $a \in R(x)$ implies $R(x) \subseteq R(a)$;
 \item $R(x) = \bigcap \{ R(a) \mid a \in R(x) \}$.
\end{enumerate}
\end{lemma}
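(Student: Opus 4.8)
The plan is to prove the cycle of implications (a) $\Rightarrow$ (b) $\Rightarrow$ (c) $\Rightarrow$ (d) $\Rightarrow$ (a), using only reflexivity and symmetry of $R$ together with the definitions of preblock and block. Note first that reflexivity gives $x \in R(x)$, so $R(x)$ is a nonempty subset of $U$ and hence a legitimate candidate for being an $R$-preblock; this trivial remark is used repeatedly.

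The implications (a) $\Rightarrow$ (b) and (b) $\Rightarrow$ (c) are direct unwindings of the definitions. If $R(x)$ is a block, it is in particular an $R$-preblock, so $R(x)^2 \subseteq R$, which is exactly (b). If $a \, R \, b$ holds for all $a,b \in R(x)$, then fixing $a \in R(x)$ and letting $b$ range over $R(x)$ shows $b \in R(a)$ for every $b \in R(x)$, i.e. $R(x) \subseteq R(a)$, which is (c). For (c) $\Rightarrow$ (d), the inclusion $R(x) \subseteq \bigcap\{R(a) \mid a \in R(x)\}$ is precisely what (c) asserts, and the reverse inclusion $\bigcap\{R(a) \mid a \in R(x)\} \subseteq R(x)$ is obtained by taking $a = x$ in the intersection, which is permissible because $x \in R(x)$.

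The one step deserving a little care is (d) $\Rightarrow$ (a). I would first deduce from (d) that $R(x)$ is an $R$-preblock: for $b,c \in R(x)$, the equality in (d) specialised to the index $a = c$ gives $b \in R(c)$, i.e. $b \, R \, c$, so $R(x)^2 \subseteq R$. It then remains to check maximality, and here the key observation is that any preblock of the form $R(x)$ is automatically maximal: if $X$ is an $R$-preblock with $R(x) \subseteq X$, then $x \in X$, so $x \, R \, y$ for every $y \in X$, whence $X \subseteq R(x)$ and $X = R(x)$. Thus $R(x)$ is a block, closing the cycle. This last observation is the crux of the argument; it is also what makes conditions (a) and (b) essentially the same statement, and I do not expect any genuine obstacle once it is spotted. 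A presentation via pairwise equivalences would work equally well, but the cyclic arrangement above is the shortest.
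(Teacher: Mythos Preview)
Your proof is correct. The paper itself does not give a proof of this lemma at all; it merely recalls the statement from \cite{Chajda91, Pogonowski81}, so there is nothing to compare your argument against beyond noting that your cyclic chain (a) $\Rightarrow$ (b) $\Rightarrow$ (c) $\Rightarrow$ (d) $\Rightarrow$ (a) is the standard way to establish such equivalences and that the key observation you single out---any $R$-preblock of the form $R(x)$ is automatically maximal---is indeed the only non-trivial ingredient.
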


A collection $\mathcal{H}$ of nonempty subsets of $U$ is called a \emph{covering} of $U$ if $\bigcup \mathcal{H} = U$.
A covering $\mathcal{H}$ is \emph{irredundant} if $\mathcal{H} \setminus \{X\}$ is not a covering for any $X \in \mathcal{H}$.
Clearly, the blocks $\mathcal{B}(R)$ of a tolerance $R$ form a covering, but this covering is not generally 
irredundant. On the other hand, each covering $\mathcal{H}$ of $U$ defines a tolerance 
$R_\mathcal{H} = \bigcup \{ X^2 \mid X \in \mathcal{H}\}$, called the \emph{tolerance induced} by $\mathcal{H}$.
Obviously, the sets in $\mathcal{H}$ are preblocks of $R_\mathcal{H}$.

Let $R$ be a tolerance induced by a covering $\mathcal{H}$ of $U$. In \cite[Proposition~3.7]{JarvRade2014}, 
we showed the equivalence of the following statements:
\begin{enumerate}[({D}1)]
 \item $\mathcal{H}$ an irredundant covering of $U$;
 \item $\mathcal{H} \subseteq \{ R(x) \mid x \in U \}$.
\end{enumerate}

\begin{example} \label{Exa:Schreider}
Let us consider the following setting modified from an example appearing in \cite{Schreider75}.
Let $\mathbb{N} = \{1,2,3,\ldots \}$. We define a tolerance $R$ on $U = \wp(\mathbb{N}) \setminus \{\emptyset\}$ 
by setting for any nonempty subsets $B,C\subseteq \mathbb{N}$:
\[
(B, C) \in R \iff B \cap C \neq \emptyset.
\]
Let $n \in \mathbb{N}$ and define $K_{n} = \{B \in U \mid n \in B\}$.
Clearly, $R(\{n\}) = K_{n}$. Each $R(\{n\}) = K_{n}$
is also a tolerance block, because all sets in $K_n$ contain $n$, and hence
they are $R$-related (cf.~Lemma~\ref{Lem:ToleranceBlocks}).

Let $\mathcal{H} = \{K_n \mid n \in \mathbb{N} \}$. Then,
$\bigcup \mathcal{H} = \wp(\mathbb{N} ) \setminus \{\emptyset\}$, that is, $\mathcal{H}$ is a covering of $U$. 
The tolerance $R_\mathcal{H}$ induced by $\mathcal{H}$ is $R$, because if $(B,C) \in R$, then there exists
$n \in B \cap C$ and so $B,C \in K_n$. Conversely, if $B,C \in K_n$,
then $(B,C) \in R$, because $K_n$ is a block. Since each set in $\mathcal{H}$ is of the from $K_n = R(\{n\})$, 
$n \in \mathbb{N}$, the covering $\mathcal{H}$ is irredundant by the equivalence of (D1) and (D2).
\end{example}

\begin{lemma} \label{Lem:IrredundantBlocks}
If $R$ is a tolerance induced by an irredundant covering $\mathcal{H}$, then $\mathcal{H} \subseteq \mathcal{B}(R)$.
\end{lemma}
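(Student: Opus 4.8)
The plan is to combine the equivalence of (D1) and (D2) recalled just before Example~\ref{Exa:Schreider} with Lemma~\ref{Lem:ToleranceBlocks}. Fix an arbitrary $X \in \mathcal{H}$. Since $\mathcal{H}$ is an irredundant covering inducing $R$, statement (D2) provides an element $x \in U$ with $X = R(x)$. On the other hand, as observed immediately after the definition of $R_\mathcal{H}$, every member of a covering is a preblock of the induced tolerance; hence $X = R(x)$ is an $R$-preblock, which by definition means that $a \, R \, b$ holds for all $a, b \in R(x)$.

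This is exactly condition (b) of Lemma~\ref{Lem:ToleranceBlocks} applied to the element $x$, so that lemma yields condition (a): $R(x)$ is a block. Therefore $X = R(x) \in \mathcal{B}(R)$, and since $X$ was an arbitrary element of $\mathcal{H}$, we conclude $\mathcal{H} \subseteq \mathcal{B}(R)$.

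I do not expect a genuine obstacle here: the substantive work has already been carried out in the proof of the equivalence (D1)$\Leftrightarrow$(D2) and in Lemma~\ref{Lem:ToleranceBlocks}. The only step requiring a moment's attention is noticing that the neighbourhood representation $X = R(x)$ furnished by (D2) is automatically a preblock — this is not a consequence of any special property of $x$, but simply of $X$ being a member of $\mathcal{H}$ and hence a preblock of $R_\mathcal{H} = R$.
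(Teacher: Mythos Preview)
Your proof is correct and follows essentially the same route as the paper: use the equivalence (D1)$\Leftrightarrow$(D2) to write each member of $\mathcal{H}$ as some $R(x)$, observe that it is a preblock because $\mathcal{H}$ induces $R$, and then apply Lemma~\ref{Lem:ToleranceBlocks} to conclude it is a block. The only difference is cosmetic---the paper phrases (D2) by writing $\mathcal{H} = \{R(y) \mid y \in Y\}$ for some $Y \subseteq U$, while you pick an arbitrary $X \in \mathcal{H}$ and produce an $x$ with $X = R(x)$.
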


\begin{proof} 
Because conditions (D1) and (D2)  are equivalent, 
there exists $Y \subseteq U$ such that $\mathcal{H} = \{ R(y) \mid y \in Y \}$. Since $R$ is
induced by $\mathcal{H}$, then for each $y \in Y$, we have $a \, R \, b$ for any $a,b \in R(y)$. 
Thus, by Lemma~\ref{Lem:ToleranceBlocks}, $R(y)$ is a block for any $y$ in $Y$, and 
$\mathcal{H} \subseteq \mathcal{B}(R)$.
\end{proof}

Let $R$ be a tolerance induced by an irredundant covering $\mathcal{H}$. The inclusion $\mathcal{H} \subseteq \mathcal{B}(R)$
presented in Lemma~\ref{Lem:IrredundantBlocks} can be proper (see Example~\ref{Exa:Helly}). In Section~\ref{Sec:Helly},
we give a characterisation for the case  $\mathcal{H} = \mathcal{B}(R)$.

\begin{example} \label{Exa:Graph1}
Tolerances can be considered as simple graphs (and vice versa). A \emph{simple graph} is an undirected graph that has no loops 
(edges connected at both ends to the same vertex) and no multiple edges.
Any tolerance $R$ on $U$ determines a graph $\mathcal{G} = (U, R)$, where $U$ is interpreted as the set of vertices and $R$ as 
the set of edges. There is a line connecting $x$ and $y$ if and only if $x \, R \, y$. Because each point is $R$-related
to itself, loops connecting a point to itself are not drawn/permitted. Note that \cite{Zelinka68} contains some elementary 
connections between tolerances and undirected graphs.

A nonempty set $X \subseteq U$ is a preblock if and only if $X$ is a clique of $\mathcal{G}$, that is, all points in $X$ are connected
by an edge of $\mathcal{G}$. A block of $R$ is thus a maximal clique. For any $x \in U$, the neighbourhood $R(x)$ is the set of points connected
to $x$. If this set forms a clique, then by Lemma~\ref{Lem:ToleranceBlocks}, $R(x)$ is necessarily a block, that is, a maximal clique.
In general, listing all maximal cliques may require exponential time as there exist graphs with exponentially many maximal cliques. 
The article \cite{Alt78} presents an improved version of the fundamental Harary--Ross algorithm \cite{Harary57} enumerating all cliques of
a graph.  

Note also that blocks (or maximal cliques) have several applications. For instance, in \cite{Niemine78} are studied
tolerances on lattices and their block description, and in \cite{Niemine79} blocks of compatible
relations on error algebras are considered.
\end{example}

\section{Tolerances induced by quasiorders} \label{Sec:Quasiorders}

Let $\lesssim$ be a quasiorder on $U$, that is, $\lesssim$ is a reflexive and transitive binary relation on $U$. 
The pair $(U, \lesssim)$ is called a \emph{quasiordered set}. An element $x$ is a \emph{minimal element} of $(U,\lesssim)$, 
if $y \lesssim x$ implies $y \gtrsim x$, where $\gtrsim$ denotes the inverse relation of $\lesssim$. 
Note that this definition coincides with the definition of minimality in partially ordered sets, because of the antisymmetry of a partial 
order $\leq$. We say that $(U,\lesssim)$ is \emph{bounded by minimal elements} if for any $x \in U$, there exists a minimal 
$m \in U$ such that $m \lesssim x$. 

The relation ${\gtrsim} \circ {\lesssim}$ is a tolerance on $U$ and it is denoted simply by $\approx$. The upset ${\lesssim}(x) = \{y \mid x \lesssim y\}$ 
is written as ${\uparrow}x$. Note that for any $x \in U$, ${\uparrow} x$ is a preblock of $\approx$, because $a,b \in {\uparrow} x$ is equivalent to
$a \gtrsim x \lesssim b$, that is, $a \approx b$.

\begin{lemma}\label{Lem:MinimalElements}
Let $\lesssim$ be a quasiorder on $U$.
\begin{enumerate}[\rm (a)]
 \item If $m$ is a minimal element, then ${\approx}(m) = {\uparrow} m$ and ${\uparrow} m$ is a block of $\approx$.
 \item If $(U,\lesssim)$ is bounded by minimal elements, then $\mathcal{H} = \{ {\uparrow} m \mid \text{$m$ is minimal\,} \}$ is an
 irredundant covering of $U$, and the tolerance induced by $\mathcal{H}$ is $\approx$.
\end{enumerate}
\end{lemma}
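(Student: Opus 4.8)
The plan is to prove the two parts of Lemma~\ref{Lem:MinimalElements} in order, using part~(a) as the main tool for part~(b).

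\medskip

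\noindent\textbf{Part (a).} Let $m$ be a minimal element of $(U,\lesssim)$. First I would show ${\approx}(m) = {\uparrow}m$. The inclusion ${\uparrow}m \subseteq {\approx}(m)$ is immediate since $m \lesssim y$ gives $m \approx y$ via $m \gtrsim m \lesssim y$. For the reverse inclusion, suppose $m \approx y$, i.e.\ there is $z$ with $m \gtrsim z \lesssim y$. Then $z \lesssim m$, and minimality of $m$ forces $z \gtrsim m$, hence $m \lesssim z \lesssim y$, so $y \in {\uparrow}m$. This establishes the equality. Then ${\approx}(m) = {\uparrow}m$ is a preblock of $\approx$ (as already noted in the text preceding the lemma, every ${\uparrow}x$ is a preblock of $\approx$), so by Lemma~\ref{Lem:ToleranceBlocks}, specifically the equivalence of (a) and the fact that $R(m) = {\approx}(m)$ is a preblock means all its elements are pairwise $\approx$-related, condition~(b) of that lemma holds, and therefore ${\approx}(m)$ is a block of $\approx$.

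\medskip

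\noindent\textbf{Part (b).} Assume $(U,\lesssim)$ is bounded by minimal elements and set $\mathcal{H} = \{{\uparrow}m \mid m \text{ minimal}\}$. To see $\mathcal{H}$ is a covering: for any $x \in U$ there is a minimal $m$ with $m \lesssim x$, i.e.\ $x \in {\uparrow}m$; thus $\bigcup \mathcal{H} = U$. Next I would show the induced tolerance $R_\mathcal{H} = \bigcup\{({\uparrow}m)^2 \mid m \text{ minimal}\}$ equals $\approx$. The inclusion $R_\mathcal{H} \subseteq {\approx}$ holds because each ${\uparrow}m$ is a preblock of $\approx$. For $\approx \subseteq R_\mathcal{H}$: if $x \approx y$, pick $z$ with $x \gtrsim z \lesssim y$, then pick a minimal $m$ with $m \lesssim z$; by transitivity $m \lesssim z \lesssim x$ and $m \lesssim z \lesssim y$, so $x,y \in {\uparrow}m$ and hence $(x,y) \in R_\mathcal{H}$. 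Finally, irredundancy: by part~(a), each ${\uparrow}m \in \mathcal{H}$ equals ${\approx}(m) = R_\mathcal{H}(m)$, so $\mathcal{H} \subseteq \{R_\mathcal{H}(x) \mid x \in U\}$, which is exactly condition~(D2); by the equivalence of (D1) and (D2), $\mathcal{H}$ is an irredundant covering of $U$.

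\medskip

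I do not expect a serious obstacle here; the argument is essentially a careful unwinding of definitions. The one point requiring the slightest care is making sure that in part~(a) the minimality of $m$ is used in the correct direction (from $z \lesssim m$ one concludes $m \lesssim z$, not the reverse), and in part~(b) that the two applications of transitivity — chaining $m \lesssim z$ with $z \lesssim x$ and with $z \lesssim y$ — are both legitimate. The real leverage comes from invoking Lemma~\ref{Lem:ToleranceBlocks} in part~(a) and the (D1)$\Leftrightarrow$(D2) equivalence in part~(b), both of which are available from the preceding text.
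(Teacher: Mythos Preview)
Your proof is correct and follows essentially the same route as the paper's own argument: both parts establish ${\approx}(m) = {\uparrow}m$ via minimality, invoke Lemma~\ref{Lem:ToleranceBlocks} to get that ${\uparrow}m$ is a block, verify the covering and the equality $R_{\mathcal{H}} = {\approx}$ by chaining through a minimal element below a common lower bound, and conclude irredundancy from the (D1)$\Leftrightarrow$(D2) equivalence. The only cosmetic difference is that you cite the preblock property of ${\uparrow}x$ from the preceding text, whereas the paper re-derives it inline.
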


\begin{proof}
(a) We first show that $m \approx x$ if and only if $x \in {\uparrow} m$. If $x \approx m$, then 
$x \gtrsim a \lesssim m$ for some $a \in U$. But since $m$ is minimal, $a \lesssim m$ gives 
$m \lesssim a \lesssim x$ and $x \in {\uparrow}m$. Conversely, $x,m \in {\uparrow} m$ implies $x \approx m$ 
since ${\uparrow} m$ is a preblock of $\approx$. Thus, ${\approx}(m) = {\uparrow} m$.

If $x,y \in {\uparrow}m$, then $m \lesssim x$ and $m \lesssim y$ give $x \approx y$. 
By Lemma~\ref{Lem:ToleranceBlocks} this means that ${\uparrow}m$ is a block of $\approx$.

(b) Let $x \in U$. Because $U$ is bounded by minimal elements, there exists a minimal $m$ such that
$m \lesssim x$ and $x \in {\uparrow} m$. Hence, $\bigcup \mathcal{H} = U$.

Next, we prove that the tolerance $R_\mathcal{H}$ induced by $\mathcal{H}$ equals $\approx$.
If $(x,y) \in R_\mathcal{H}$, then there exists a minimal $m$ such that $x,y \in {\uparrow} m$.
Then, $x \gtrsim m \lesssim y$ gives $x \approx y$. Conversely, $x \approx y$ means that
there exists an element $a$ such that $x \gtrsim a \lesssim y$. Thus, $a \lesssim x,y$. Since
$U$ is bounded by minimal elements, there exists a minimal $m$ such that $m \lesssim a \lesssim x,y$.
Therefore, $x,y \in {\uparrow} m$ and $(x,y) \in R_\mathcal{H}$.

Because each element of $\mathcal{H}$ is of the form $R_\mathcal{H}(m) = {\approx}(m) = {\uparrow}m$, 
$\mathcal{H}$ is irredundant, since (D1) and (D2) are equivalent.
\end{proof}

\begin{corollary}\label{Cor:Upset}
Let $(U,\lesssim)$ be bounded by minimal elements and $x,y \in U$. Then, $x \approx y$ 
if and only if there exists a minimal $m$ such that  $x,y \in {\uparrow} m$.
\end{corollary}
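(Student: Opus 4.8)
The plan is to observe that Corollary~\ref{Cor:Upset} is essentially a reformulation of the identity $R_\mathcal{H} = {\approx}$ already proved in Lemma~\ref{Lem:MinimalElements}(b). Recall that, by definition, $R_\mathcal{H} = \bigcup \{ X^2 \mid X \in \mathcal{H} \}$ with $\mathcal{H} = \{ {\uparrow} m \mid m \text{ minimal} \}$; hence $(x,y) \in R_\mathcal{H}$ holds if and only if there is a minimal $m$ with $x,y \in {\uparrow} m$. Since Lemma~\ref{Lem:MinimalElements}(b) guarantees $R_\mathcal{H} = {\approx}$ whenever $(U,\lesssim)$ is bounded by minimal elements, the equivalence follows at once.

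Alternatively, and this is just unwinding the relevant part of the proof of Lemma~\ref{Lem:MinimalElements}(b), I would argue directly. For the ``if'' direction, suppose $m$ is minimal and $x,y \in {\uparrow} m$. Then $x \gtrsim m \lesssim y$, so $x \approx y$ by the definition $\approx = {\gtrsim} \circ {\lesssim}$; equivalently, this is immediate from the fact, noted before Lemma~\ref{Lem:MinimalElements}, that ${\uparrow} m$ is a preblock of $\approx$. Note that this direction uses neither minimality of $m$ nor boundedness. For the ``only if'' direction, suppose $x \approx y$. Then there is an $a \in U$ with $x \gtrsim a \lesssim y$, that is, $a \lesssim x$ and $a \lesssim y$. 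Since $(U,\lesssim)$ is bounded by minimal elements, there is a minimal $m$ with $m \lesssim a$, and transitivity of $\lesssim$ gives $m \lesssim x$ and $m \lesssim y$, i.e., $x, y \in {\uparrow} m$.

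There is no real obstacle here: the statement is an immediate consequence of Lemma~\ref{Lem:MinimalElements}(b), and the only point requiring a little care is to invoke boundedness by minimal elements exactly once, in the ``only if'' direction, to secure a minimal element below the witness $a$.
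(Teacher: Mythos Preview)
Your proposal is correct and matches the paper's treatment: the paper states Corollary~\ref{Cor:Upset} without proof, evidently as an immediate consequence of Lemma~\ref{Lem:MinimalElements}(b), and your first paragraph is exactly this observation. Your direct argument is likewise just the relevant portion of the proof of Lemma~\ref{Lem:MinimalElements}(b), so there is no meaningful difference in approach.
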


Let $R$ be a tolerance on $U$. It is clear that the relation $\lesssim_R$ defined by
\[ x \lesssim_R y \iff R(x) \subseteq R(y) \]
is a quasiorder on $U$. In addition, we denote by $\approx_R$ the relation ${\gtrsim_R} \circ {\lesssim_R}$.
It is easy to observe that
\[ x \approx_R y \iff (\exists a) \, R(a) \subseteq R(x) \cap R(y).\]

\begin{lemma} \label{Lem:Minimal}
If $R$ is a tolerance on $U$ induced by an irredundant covering $\mathcal{H}$, then 
$(U,\lesssim_R)$ is bounded by the minimal elements  $\{ d \in U \mid \text{$R(d)$ is a block} \}$.
\end{lemma}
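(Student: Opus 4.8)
The plan is to combine the equivalence of conditions (D1) and (D2) with Lemma~\ref{Lem:IrredundantBlocks} and with Lemma~\ref{Lem:ToleranceBlocks}(c). Since $R$ is induced by the irredundant covering $\mathcal{H}$, the equivalence of (D1) and (D2) provides a set $Y \subseteq U$ with $\mathcal{H} = \{ R(y) \mid y \in Y \}$, and Lemma~\ref{Lem:IrredundantBlocks} guarantees that every $R(y)$ with $y \in Y$ is an $R$-block. Put $S = \{ d \in U \mid R(d) \text{ is a block} \}$; then $Y \subseteq S$, although $S$ may be strictly larger. I will verify (i) that each $d \in S$ is a minimal element of $(U,\lesssim_R)$, (ii) that for every $x \in U$ there is $d \in S$ with $d \lesssim_R x$, and, as a bonus, (iii) that $S$ is exactly the set of minimal elements.

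For (i), suppose $y \lesssim_R d$ with $d \in S$, that is, $R(y) \subseteq R(d)$. By reflexivity $y \in R(y) \subseteq R(d)$, so $y \in R(d)$; since $R(d)$ is a block, Lemma~\ref{Lem:ToleranceBlocks}(c) gives $R(d) \subseteq R(y)$, that is, $d \lesssim_R y$. Hence $d$ is minimal.

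For (ii), fix $x \in U$. As $\mathcal{H}$ is a covering, $x \in R(y)$ for some $y \in Y$, and $R(y)$ is a block; applying Lemma~\ref{Lem:ToleranceBlocks}(c) to $x \in R(y)$ yields $R(y) \subseteq R(x)$, that is, $y \lesssim_R x$, with $y \in Y \subseteq S$. Combined with (i), this says exactly that $(U,\lesssim_R)$ is bounded by minimal elements lying in $S$. For (iii), if $m$ is minimal, then (ii) supplies $y \in S$ with $y \lesssim_R m$, so $m \lesssim_R y$ by minimality, and therefore $R(m) = R(y)$ is a block and $m \in S$.

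The one step that carries the content is the passage from $x \in R(y)$ to $R(y) \subseteq R(x)$ in (ii): it rests on $R(y)$ being a \emph{block}, which is precisely what Lemma~\ref{Lem:IrredundantBlocks} secures for the members of an irredundant covering. For a general covering element this inclusion can fail, so irredundancy genuinely enters here; everything else is a direct unwinding of the definitions of $\lesssim_R$ and of minimality.
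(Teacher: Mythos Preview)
Your proof is correct and complete; it establishes exactly the three pieces required (every $d$ with $R(d)$ a block is minimal, every element lies above such a $d$, and conversely every minimal element has this form).

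The route, however, differs from the paper's. The paper argues via the lattice structure of $\wp(U)^\UP$: by the equivalence (C1)$\Leftrightarrow$(C2) stated in Section~\ref{Sec:Introduction}, irredundancy of $\mathcal{H}$ makes $\wp(U)^\UP$ an atomistic complete Boolean lattice whose atoms are precisely the $R(d)$ that are blocks. Minimality of $d$ then follows because any nonempty $R(x)\subseteq R(d)$ in $\wp(U)^\UP$ must equal the atom $R(d)$, and boundedness follows because every $R(x)\in\wp(U)^\UP$ sits above some atom. Your argument replaces this imported Boolean-lattice machinery by a direct appeal to Lemma~\ref{Lem:ToleranceBlocks}(c): the single observation that $x\in R(d)$ with $R(d)$ a block forces $R(d)\subseteq R(x)$ handles both (i) and (ii). This is more elementary and entirely self-contained within the paper, at the modest cost of not exhibiting the connection to the atom structure of $\wp(U)^\UP$ that the authors exploit elsewhere.
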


\begin{proof} 
Let $R(d)$ be a block. As noted in Section~\ref{Sec:Introduction}, $R(d)$ is an atom of $\wp(U)^\UP$. 
Suppose that $x \lesssim_R d$. Then, $\emptyset \neq R(x) \subseteq R(d)$, and because $R(x) \in \wp(U)^\UP$ and 
$R(d)$ is an atom of $\wp(U)^\UP$, we have $R(d) = R(x)$. This gives $x \gtrsim_R d$, and thus $d$ is minimal. 
On the other hand, if $m$ is a minimal element, then for all $x \in U$, $x \lesssim_R m$ implies $x \gtrsim_R m$, that is, 
$R(x) \subseteq R(m)$ implies $R(x) = R(m)$. Because $R(m) \in \wp(U)^\UP$, there exists an atom $R(d)$ of $\wp(U)^\UP$ such that $R(d) \subseteq R(m)$. 
By the minimality of $m$, we have $R(m) = R(d)$. Since $R(d)$ is a block, also $R(m)$ is  a block. 
Therefore, the set of minimal elements of $(U,\lesssim_R)$ is $\{ d \in U \mid \text{$R(d)$ is a block} \}$.

Let $x \in U$. Since  $R(x) \in \wp(U)^\UP$, there exists $R(d)$ such that $R(d)$ is a block and 
$R(d) \subseteq R(x)$. But $R(d) \subseteq R(x)$ means $d \lesssim_R x$ and since $d$ is minimal by the above, 
$U$ is bounded by the minimal elements.
\end{proof}

\begin{theorem} \label{Thm:Characterization}
For a tolerance $R$ on $U$, the following are equivalent:
\begin{enumerate}[\rm (a)]
 \item $R$ is induced by an irredundant covering.
 \item $(U,\lesssim_R)$ is bounded by minimal elements and $R$ equals ${\approx_R}$.
\end{enumerate}
\end{theorem}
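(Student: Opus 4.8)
The strategy is to prove the two implications separately, using the lemmas already established. For the direction (a)$\Rightarrow$(b), suppose $R$ is induced by an irredundant covering $\mathcal{H}$. Lemma~\ref{Lem:Minimal} immediately tells us that $(U,\lesssim_R)$ is bounded by minimal elements, namely by the set $D = \{d \in U \mid R(d)\text{ is a block}\}$. It remains to show $R = {\approx_R}$. One inclusion is essentially formal: if $x \approx_R y$, then there is $a$ with $R(a) \subseteq R(x)\cap R(y)$; since $a \in R(a)$ (reflexivity) we get $a \in R(x)$, i.e. $x\,R\,a$, and $a\in R(y)$, i.e. $y\,R\,a$. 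To conclude $x\,R\,y$ one cannot argue directly from transitivity (which fails), but one can use that $R$ is induced by an irredundant covering: by Lemma~\ref{Lem:MinimalElements}(a)-type reasoning applied via Lemma~\ref{Lem:Minimal}, $R(m) = {\approx_R}(m)$ for minimal $m$, and the blocks $R(d)$, $d\in D$, cover $U$; picking a block $R(d)\subseteq R(a)$ we have $x,y \in R(d)$ because $R(d)$ being a block is a preblock together with $a$... more carefully: since $R$ is induced by $\{R(d)\mid d\in D\}$ (this follows from Lemma~\ref{Lem:IrredundantBlocks} and $\mathcal H\subseteq\{R(d)\mid d\in D\}$ being a covering, as each $R(x)$ contains some block $R(d)$), $x\,R\,y$ holds iff $x,y$ lie in a common block $R(d)$. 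From $R(a)\subseteq R(x)\cap R(y)$ and choosing a block $R(d)\subseteq R(a)$, we get $d\lesssim_R x$ and $d\lesssim_R y$, hence $x,y\in R(d)$ since $R(d)$ is a block (use Lemma~\ref{Lem:ToleranceBlocks}(c): $d\in R(x)$-analogue). Thus $x\,R\,y$. Conversely, if $x\,R\,y$, they lie in a common block $R(d)$, so $d\lesssim_R x$ and $d\lesssim_R y$, which gives $x \gtrsim_R d \lesssim_R y$, i.e. $x \approx_R y$. Hence $R = {\approx_R}$.

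For (b)$\Rightarrow$(a), assume $(U,\lesssim_R)$ is bounded by minimal elements and $R = {\approx_R}$. Apply Lemma~\ref{Lem:MinimalElements}(b) to the quasiorder $\lesssim_R$: the family $\mathcal{H} = \{{\uparrow}m \mid m\text{ minimal in }(U,\lesssim_R)\}$ is an irredundant covering of $U$, and the tolerance it induces is $\approx\ =\ {\gtrsim_R}\circ{\lesssim_R}\ =\ {\approx_R}$. By hypothesis this equals $R$. Therefore $R$ is induced by the irredundant covering $\mathcal{H}$, establishing (a).

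The main obstacle is the inclusion ${\approx_R} \subseteq R$ in the first direction: unlike its converse it genuinely requires the irredundancy hypothesis, and the cleanest route is to first record (as a preliminary observation, perhaps folding in the content of Lemma~\ref{Lem:Minimal}) that under (a) the tolerance $R$ is induced by the covering of blocks $\{R(d) \mid R(d)\text{ is a block}\}$, so that membership in $R$ is witnessed by a common block. Once that reformulation is in hand, the argument via a minimal/block element $d$ with $R(d)\subseteq R(a)$ and Lemma~\ref{Lem:ToleranceBlocks} is routine. The second direction is a direct citation of Lemma~\ref{Lem:MinimalElements}(b) together with the hypothesis $R = {\approx_R}$, and needs no real work.
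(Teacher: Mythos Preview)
Your plan is correct and follows essentially the same route as the paper: Lemma~\ref{Lem:Minimal} for boundedness, Lemma~\ref{Lem:IrredundantBlocks} together with (D1)/(D2) to realise $\mathcal{H}$ as a family of blocks $R(d)$, then Lemma~\ref{Lem:ToleranceBlocks}(c) to pass between $x\in R(d)$ and $R(d)\subseteq R(x)$ for both inclusions $R\subseteq{\approx_R}$ and ${\approx_R}\subseteq R$, and Lemma~\ref{Lem:MinimalElements}(b) verbatim for (b)$\Rightarrow$(a). The only cosmetic difference is that the paper invokes Corollary~\ref{Cor:Upset} to produce the minimal $d$ directly in the ${\approx_R}\subseteq R$ step, whereas you first take an arbitrary witness $a$ and then descend to a block $R(d)\subseteq R(a)$; the content is identical.
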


\begin{proof}  (a)$\Rightarrow$(b): By Lemma~\ref{Lem:Minimal}, $(U,\lesssim_R)$ is bounded by minimal elements. Thus,
it suffices to show that $R$ equals ${\approx_R}$. If $a \, R \, b$, then by Lemma~\ref{Lem:IrredundantBlocks} and the equivalence of (D1) and (D2), 
there exists $d$ such that $R(d)$ is a block of $R$ and $a,b \in R(d)$. By  Lemma~\ref{Lem:ToleranceBlocks}, we have $R(d) \subseteq R(a) \cap R(b)$, and hence $a \approx_R b$.
Conversely, if  $a \approx_R b$, then by Corollary~\ref{Cor:Upset}, there exists a $\lesssim_R$-minimal element $d$ such that $a,b \in {\uparrow}d$,
that is, $R(d) \subseteq R(a),R(b)$. Notice that by Lemma~\ref{Lem:Minimal}, $d$ is an element such that $R(d)$ is a block. Then, $R(d) \subseteq R(a),R(b)$
implies  $a,b \in R(d)$, and since $R(d)$ is a block, we have $a \, R \, b$.

(b)$\Rightarrow$(a): If $(U,\lesssim_R)$ is bounded by minimal elements, then by Lemma~\ref{Lem:MinimalElements}(b),
the tolerance $R = {\approx_R}$ is induced by the irredundant covering  $\mathcal{H} = \{ {\uparrow} m \mid \text{$m$ is minimal\,} \}$.
\end{proof}

\begin{example} \label{Exa:Graph2}
For a given tolerance $R$ on $U$, the minimal elements $x$ in $(U,\lesssim_R)$ are such that $R(x)$ is a block, as 
is shown in the proof of Lemma~\ref{Lem:Minimal}. In Example~\ref{Exa:Graph1}, we noted that $R(x)$ is a block if and only if 
$R(x)$ is a clique.

Consider the graph $\mathcal{G}$ defined by the tolerance $R$ on $U = \{a,b,c,d\}$ depicted in Figure~\ref{Fig:figure1}.
Note that this graph originates from Example 1.3 in \cite{Pogonowski81}.
The $R$-neighbourhood of the elements $a$, $b$, and $d$ forms a clique. Thus, they are minimal in  $(U,\lesssim_R)$.
\begin{figure}[ht]
\centering
\includegraphics[width=100mm]{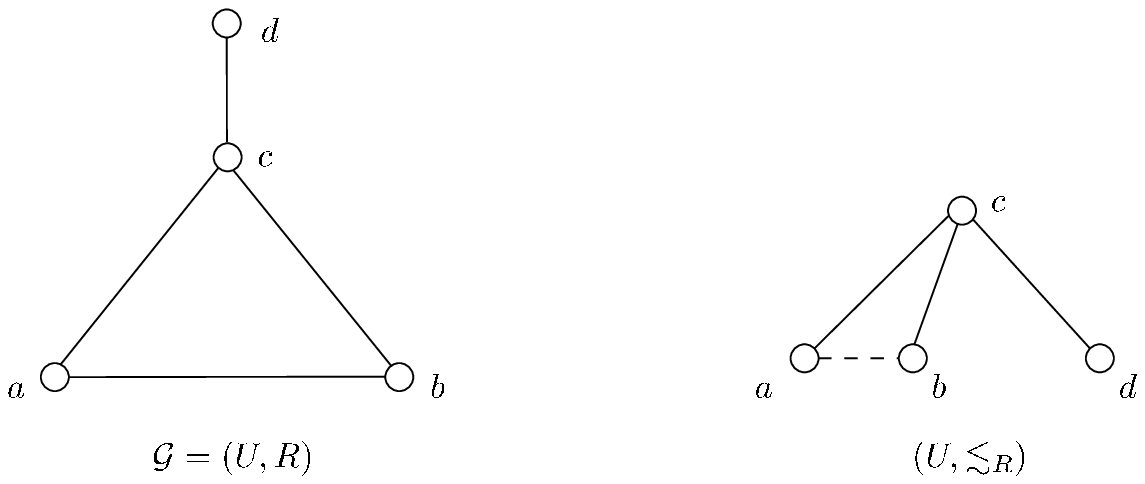}
\caption{\label{Fig:figure1}}
\end{figure}
Note that the elements $a$ and $b$ are such that $a \lesssim_R b$ and $b \lesssim_R a$, but $a \neq b$. This is expressed in the diagram
of $(U,\lesssim_R)$ by the dashed segment, and the solid lines depict the ``natural'' partial order between the elements. Because $U$ is bounded 
by minimal elements and $\approx_R$ is equal to $R$, the family $\{ \{a,b,c\}, \{c,d\}\}$ of the $R$-neighbourhoods of the minimal elements form an 
irredundant covering inducing $R$ by Theorem~\ref{Thm:Characterization}. 
\end{example}

\begin{remark} \label{Rem:SummariseCoverings}
Let $R$ be a tolerance induced by an irredundant covering $\mathcal{H}$. Since the sets in $\mathcal{H}$ are of the form ${\uparrow} m = R(m)$, 
where $m$ is a minimal element of $(U, \lesssim_R)$, we have that $x \, R \, y$ if and only if there exists a minimal element element $m$ in  
$(U, \lesssim_R)$ such that $x \, R \, m$ and $y \,R \, m$. Thus, the minimal elements can be considered as a ``model'' of the tolerance $R$, 
because $R$ is completely determined by the $R$-neighbourhoods of the minimal elements. 
In \cite{JarvRade2014}, we called these elements \emph{complete}. We also know that $\mathcal{H} \subseteq \mathcal{B}(R)$
and this inclusion can be proper. Thus, there can be such blocks of $R$ that are not determined by a complete element. In Section~\ref{Sec:Helly},
we will concentrate on characterising the case in which $\mathcal{H} = \mathcal{B}(R)$.
\end{remark}
 
We end this section considering shortly the notion of a canonical base introduced in \cite{Schreider75}. Let $R$ be a tolerance on $U$. A subset
$\mathcal{K} \subseteq \mathcal{B}(R)$ is called a \emph{canonical base} of $R$ if $\mathcal{K}$ induces $R$, but  for any $X \in \mathcal{K}$,
the family $\mathcal{K} \setminus \{ X \}$ does not induce $R$. 

If $R$ is induced by an irredundant covering of $\mathcal{H}$, then $\mathcal{H} \subseteq \mathcal{B}(R)$, as we noted already. In addition,
$\mathcal{H}$ consists of the upsets ${\uparrow} m$ of the minimal elements of $(U,\lesssim_R)$. This implies that $\mathcal{H}$ is
a canonical base, because if we remove ${\uparrow} m$ from $\mathcal{H}$, then the pair $(m,m)$ does not belong to the relation induced by
the family $\mathcal{H} \setminus \{ {\uparrow} m \}$, but trivially $(m,m) \in R$.

\begin{proposition} \label{Prop:UniqueBase}
Let $R$ be a tolerance induced by a family $\mathcal{H} \subseteq \mathcal{B}(R)$. Then, the following assertions are equivalent:
\begin{enumerate} [\rm (a)]
 \item $\mathcal{H}$ is an irredundant covering of $U$;
 \item $\mathcal{H}$ is the unique canonical base and $\mathcal{H} \subseteq \{ R(x) \mid x \in U\}$.
\end{enumerate}
\end{proposition}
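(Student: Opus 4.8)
The plan is to prove Proposition~\ref{Prop:UniqueBase} by combining the characterisation results already established, especially the equivalence of (D1) and (D2) and Lemma~\ref{Lem:IrredundantBlocks}, together with the observation (made in the paragraph preceding the proposition) that an irredundant covering $\mathcal{H}$ inducing $R$ with $\mathcal{H} \subseteq \mathcal{B}(R)$ is automatically a canonical base. So the content to be supplied is really (i) that $\mathcal{H}$ irredundant $\Rightarrow$ it is the \emph{unique} canonical base, and (ii) the converse direction.

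For (a)$\Rightarrow$(b): The inclusion $\mathcal{H} \subseteq \{R(x) \mid x \in U\}$ is just (D2), which holds by the equivalence (D1)$\iff$(D2). That $\mathcal{H}$ is a canonical base is the remark already recorded in the text (removing ${\uparrow}m = R(m)$ kills the pair $(m,m)$). For uniqueness, I would first invoke Theorem~\ref{Thm:Characterization} and Lemma~\ref{Lem:Minimal} (via Remark~\ref{Rem:SummariseCoverings}) to pin down $\mathcal{H} = \{ R(m) \mid m \text{ minimal in } (U,\lesssim_R) \} = \{{\uparrow}m \mid m \text{ minimal}\}$. Now let $\mathcal{K}$ be any canonical base of $R$. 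First, $\mathcal{H} \subseteq \mathcal{K}$: for each minimal $m$, the pair $(m,m) \in R$ must be covered by some $B \in \mathcal{K}$, so $m \in B$; since $B$ is a block we have $B \subseteq R(m)$, but by Lemma~\ref{Lem:Minimal} $R(m)$ is an atom of $\wp(U)^\UP$ and $B$ (being a preblock, in fact a block) satisfies $B \in \wp(U)^\UP$ via $B = R(x)$-type arguments — more directly, $m \in B$ gives $R(m) \subseteq \bigcup\{ B'^2\}$... I should instead argue: $B \ni m$ and $B$ a block means $B = \{x : B \subseteq R(x)\}$, and $B \subseteq R(m)$ forces, by atomicity of $R(m)$ and $B = R(d)$ for the complete element $d$ generating $B$ — here I use that $R$ is induced by an irredundant covering so every block containing a minimal $m$... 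Actually the cleanest route: $B \subseteq R(m)$ and $R(m)$ is a block, and a block contained in a preblock that is itself a block forces equality only if... no. Let me use instead that $B$ is a block with $m \in B$; then $B^2 \subseteq R = \approx_R$, and for any $y \in B$, $m \approx_R y$, and since $m$ is minimal, Lemma~\ref{Lem:MinimalElements}(a) (applied with $\lesssim = \lesssim_R$) gives ${\approx_R}(m) = {\uparrow}m = R(m)$, so $y \in R(m)$; hence $B \subseteq R(m)$, and by maximality of $B$ as a preblock together with $R(m)$ being a preblock (it is a block by Lemma~\ref{Lem:Minimal}) we get $B = R(m)$. Thus every block of $\mathcal{K}$ meeting the set of minimal elements equals some ${\uparrow}m \in \mathcal{H}$, and conversely each ${\uparrow}m$ must lie in $\mathcal{K}$ to cover $(m,m)$; this yields $\mathcal{H} \subseteq \mathcal{K}$. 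Finally $\mathcal{K} \subseteq \mathcal{H}$: since $\mathcal{H} \subseteq \mathcal{K}$ already induces $R$, minimality of the canonical base $\mathcal{K}$ forces $\mathcal{K} = \mathcal{H}$. Hence $\mathcal{H}$ is the unique canonical base.

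For (b)$\Rightarrow$(a): Assume $\mathcal{H}$ is the unique canonical base and $\mathcal{H} \subseteq \{R(x) \mid x \in U\}$. Since $\mathcal{H} \subseteq \mathcal{B}(R)$ by hypothesis and $\mathcal{H}$ induces $R$, $\mathcal{H}$ is in particular a covering of $U$ (every $(x,x)\in R$ is covered). It remains to show this covering is irredundant; by the equivalence of (D1) and (D2) it is enough to have $\mathcal{H} \subseteq \{R(x) \mid x \in U\}$ \emph{and} that no member is superfluous as a covering. Suppose some $X \in \mathcal{H}$ had $\mathcal{H}\setminus\{X\}$ still a covering. I claim $\mathcal{H}\setminus\{X\}$ still induces $R$: this is the main obstacle and I expect it needs the structure coming from $\mathcal{H} \subseteq \{R(x)\mid x\in U\}$. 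Write $X = R(x_0)$. If $\mathcal{H}\setminus\{X\}$ is a covering, pick $Y \in \mathcal{H}$, $Y \neq X$, with $x_0 \in Y$; then $Y$ is a block containing $x_0$, so $Y \subseteq R(x_0) = X$, and since $X$ is a preblock (indeed block) and $Y$ maximal, $Y = X$, contradiction — unless no such $Y\neq X$ exists, i.e. $x_0$ lies only in $X$ among members of $\mathcal{H}$, contradicting that $\mathcal{H}\setminus\{X\}$ covers $U$. Hence $\mathcal{H}$ is already irredundant \emph{as a covering}, and together with $\mathcal{H} \subseteq \{R(x)\mid x\in U\}$, the equivalence (D1)$\iff$(D2) gives (a). (In fact this shows the hypothesis ``$\mathcal{H}\subseteq\{R(x)\mid x\in U\}$'' already does most of the work, and uniqueness of the canonical base is the part that in the forward direction is the genuinely new content.)

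The main obstacle I anticipate is the uniqueness argument in (a)$\Rightarrow$(b): one must show \emph{every} canonical base coincides with $\mathcal{H}$, not merely that $\mathcal{H}$ is one canonical base. The key leverage is Lemma~\ref{Lem:MinimalElements}(a)/Lemma~\ref{Lem:Minimal}, which force any block of an arbitrary canonical base $\mathcal{K}$ that contains a minimal element $m$ to equal ${\uparrow}m = R(m)$, combined with the necessity of covering the reflexive pair $(m,m)$. One should double-check the edge case where two distinct minimal elements $m,m'$ satisfy $m \lesssim_R m'$ and $m' \lesssim_R m$ (so $R(m) = R(m')$, as in Example~\ref{Exa:Graph2}): then ${\uparrow}m = {\uparrow}m'$ is a single member of $\mathcal{H}$, and covering either of $(m,m)$, $(m',m')$ forces that member into $\mathcal{K}$, so no inconsistency arises.
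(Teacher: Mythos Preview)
Your (a)$\Rightarrow$(b) argument, once you settle on the clean route, is exactly the paper's: any canonical base $\mathcal{K}$ is a covering, so each minimal $m$ lies in some $B\in\mathcal{K}$; since $B$ is a block and $m\in B$, $B\subseteq R(m)$, and as $R(m)$ is itself a block, $B=R(m)\in\mathcal{H}$; hence $\mathcal{H}\subseteq\mathcal{K}$, and canonicity of $\mathcal{K}$ forces equality. Your meander through atomicity of $\wp(U)^\UP$ and Lemma~\ref{Lem:MinimalElements}(a) is unnecessary once you recall the general fact $B=\{x:B\subseteq R(x)\}$ for blocks.

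For (b)$\Rightarrow$(a) you overwork the argument and mis-state what (D1)$\iff$(D2) delivers. You write that ``it is enough to have $\mathcal{H}\subseteq\{R(x)\mid x\in U\}$ \emph{and} that no member is superfluous as a covering'', but the second clause \emph{is} irredundancy, so this is circular. The paper's proof is a single line: reflexivity of $R$ makes any canonical base a covering, and since $\mathcal{H}$ satisfies (D2) by hypothesis, (D2)$\Rightarrow$(D1) gives irredundancy immediately. Your subsequent direct argument (if $X=R(x_0)$ were removable, any block $Y\ni x_0$ in $\mathcal{H}$ would satisfy $Y\subseteq R(x_0)=X$, forcing $Y=X$) is correct but is precisely a re-proof of (D2)$\Rightarrow$(D1) in this special case. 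Your closing remark that uniqueness of the canonical base is not actually used in (b)$\Rightarrow$(a) is accurate and matches the paper.
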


\begin{proof} 
(a)$\Rightarrow$(b): If $\mathcal{H}$ is an irredundant covering of $U$, then $\mathcal{H} \subseteq \{ R(x) \mid x \in U\}$
and $\mathcal{H}$ is a canonical base.
Let $\mathcal{K} \subseteq \mathcal{B}(R)$ be an arbitrary canonical base of $R$. 
Then, $\mathcal{K}$ is also a covering of $U$, and hence for any minimal element
$m$ of $(U,\lesssim_R)$, there exists a block $B \in \mathcal{K}$ such that $m \in B$.
Because $B$ is a block, $B \subseteq R(m) \in \mathcal{H}$. Since $R(m)$ is also a block of $R$,
we get $R(m)=B$. Thus $\mathcal{H} \subseteq \mathcal{K}$, and we get $\mathcal{H=K}$,
because $\mathcal{K}$ is a canonical base.

(b)$\Rightarrow$(a): The reflexivity of $R$ implies that any canonical base is a covering of $U$. 
Because the covering $\mathcal{H}$ satisfies condition (D2), it is irredundant.
\end{proof}

\begin{example}
In Proposition~\ref{Prop:UniqueBase}, we showed that if $R$ is a tolerance induced by an irredundant covering,
then this covering is also the unique canonical base. In this example, we show that
unique bases do not necessarily form irredundant coverings.
Consider the tolerance $R$ on $\{a,b,c,d\}$ such that $R(a) = \{a,b\}$, $R(b) = \{a,b,c\}$,
$R(c) = \{b,c,d\}$, and $R(d) = \{c,d\}$. The family $\mathcal{B}(R) = \{ \{a,b\}, \{b,c\}, \{c,d\}\}$
of the blocks of $R$ is the unique canonical base of $R$, but  $\mathcal{B}(R)$ does
not form an irredundant covering of $U$, because the block $\{b,c\}$ can be removed and we still
have a covering of $U$.
\end{example}

\section{Helly number and irredundant coverings} \label{Sec:Helly}

The \emph{Helly number} of a family of sets $\mathcal{H}$ is $k$, if for any subfamily $\mathcal{S}$ of $\mathcal{H}$, any $k$ members of $\mathcal{S}$ 
have a common point, then the sets in $\mathcal{S}$ have a common point. This number is named after Edward Helly (1884--1943). 
Let us now define a similar notion in quasiordered sets. Let $(U,\lesssim)$ be a quasiordered set. We say that $(U,\lesssim)$ has
the \emph{Helly number} $k$, if for any subset $A \subseteq U$, any $k$ elements of $A$ have a common lower bound, then $A$ has a
common lower bound. In particular, the Helly number of $(U,\lesssim)$ is 2 if and only if every preblock of $\approx$  has a lower bound.

\begin{theorem} \label{Thm:HellyCharacterisation}
Let $(U,\lesssim)$ be a quasiordered set. Then, the following assertions are equivalent:
\begin{enumerate}[\rm (a)]
 \item The Helly number of $(U,\lesssim)$ is 2; 
 \item $\mathcal{B}(\approx) = \{ {\uparrow} m \mid \text{$m$ is minimal}\}$;
 \item $\mathcal{B}(\approx)$ is an irredundant covering and $(U,\lesssim)$ is bounded by minimal elements.
\end{enumerate}
\end{theorem}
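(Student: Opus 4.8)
The plan is to prove the cycle of implications (a)$\Rightarrow$(b)$\Rightarrow$(c)$\Rightarrow$(a), drawing heavily on the structural results of Section~\ref{Sec:Quasiorders}. First I would observe that the hypothesis ``Helly number 2'' means exactly that every preblock of $\approx$ has a common lower bound; in particular, taking $A = U$ when $U$ is finite, or more carefully working block-by-block, I would first extract from Helly number 2 the fact that $(U,\lesssim)$ is bounded by minimal elements. Indeed, for any $x \in U$, the singleton $\{x\}$ is trivially a preblock, so applying Helly and then descending (using that any nonempty subset of lower bounds of $x$ is again a preblock, so again has a lower bound) one reaches a $\lesssim$-minimal element below $x$; here one should be slightly careful about whether a Zorn-type argument is needed, but the Helly condition applied to chains of lower bounds of $x$ delivers a minimal element directly. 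Once boundedness by minimal elements is in hand, Lemma~\ref{Lem:MinimalElements}(b) tells us $\mathcal{H} = \{{\uparrow}m \mid m \text{ minimal}\}$ is an irredundant covering inducing $\approx$, and Lemma~\ref{Lem:MinimalElements}(a) tells us each ${\uparrow}m$ is a block, so $\{{\uparrow}m\} \subseteq \mathcal{B}(\approx)$.

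For (a)$\Rightarrow$(b) it then remains to prove the reverse inclusion $\mathcal{B}(\approx) \subseteq \{{\uparrow}m\}$. Let $B$ be an arbitrary block of $\approx$. Then $B$ is a preblock, so by Helly number 2 it has a common lower bound $a$, i.e.\ $a \lesssim x$ for all $x \in B$. Since $(U,\lesssim)$ is bounded by minimal elements, pick a minimal $m$ with $m \lesssim a$, hence $m \lesssim x$ for all $x \in B$, i.e.\ $B \subseteq {\uparrow}m$. But ${\uparrow}m$ is itself a preblock of $\approx$ (noted in the text before Lemma~\ref{Lem:MinimalElements}), and $B$ is a maximal preblock, so $B = {\uparrow}m$. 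This gives (b). The implication (b)$\Rightarrow$(c) is then almost immediate: by Lemma~\ref{Lem:MinimalElements}(b), the family $\{{\uparrow}m \mid m \text{ minimal}\}$ is an irredundant covering provided $(U,\lesssim)$ is bounded by minimal elements, so I would first argue that (b) forces boundedness by minimal elements — given $x$, the set $\{x\}$ lies in some block $B = {\uparrow}m$, whence $m \lesssim x$ — and then (b) rewrites $\mathcal{B}(\approx)$ as exactly that irredundant covering.

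For (c)$\Rightarrow$(a) I would argue contrapositively on the Helly number, or directly: assume $\mathcal{B}(\approx)$ is an irredundant covering and $(U,\lesssim)$ is bounded by minimal elements. By Lemma~\ref{Lem:IrredundantBlocks} applied to the tolerance $\approx$ induced by the irredundant covering $\mathcal{B}(\approx)$, together with the equivalence of (D1) and (D2), every member of $\mathcal{B}(\approx)$ has the form ${\approx}(x)$ for some $x$; combined with Lemma~\ref{Lem:MinimalElements}(a) and the fact that the minimal elements are exactly the $x$ with ${\approx}(x)$ a block (as in the proof of Lemma~\ref{Lem:Minimal}, transcribed to $\approx$), we get $\mathcal{B}(\approx) = \{{\uparrow}m \mid m \text{ minimal}\}$. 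Now to check Helly number 2, let $X$ be any preblock of $\approx$; it is contained in some block $B = {\uparrow}m$, so $m$ is a common lower bound of $X$, and the Helly condition holds. The main obstacle I anticipate is the careful handling of the descent to minimal elements under the Helly-number hypothesis without tacitly assuming finiteness or invoking Zorn's Lemma in a way the paper has not set up: one must verify that the downward-directed family of lower bounds of a given element, being a preblock, genuinely yields a minimal lower bound purely from Helly number 2, and I would spell that argument out explicitly rather than wave at it.
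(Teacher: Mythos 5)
Your overall skeleton --- the cycle (a)$\Rightarrow$(b)$\Rightarrow$(c)$\Rightarrow$(a), with (b)$\Rightarrow$(c) handled exactly as the paper does --- is right, but two of your steps rest on claims that are false as stated. In (a)$\Rightarrow$(b) you route through the auxiliary claim that Helly number $2$ already forces $(U,\lesssim)$ to be bounded by minimal elements, and you justify the descent by asserting that any nonempty subset of the lower bounds of $x$ is a preblock of $\approx$. That is not true: two lower bounds of $x$ need not have a common lower bound. Take $U=\{x,a,b\}$ with $a\lesssim x$, $b\lesssim x$ and no other non-reflexive relations; this quasiordered set has Helly number $2$, yet $\{a,b\}$ is not a preblock, so the Helly hypothesis cannot be applied to the family of lower bounds, and the remaining ``descent to a minimal element'' is precisely the Zorn-type argument you admit to postponing (only \emph{chains} of lower bounds are preblocks, and even then a maximality principle is needed). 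The detour is in fact unnecessary: the paper observes that the Helly lower bound $m$ of a block $B$ is \emph{automatically} minimal, because $B\subseteq{\uparrow}m$ forces $B={\uparrow}m$ by maximality of $B$, and then $a\lesssim m$ gives ${\uparrow}m\subseteq{\uparrow}a$ with ${\uparrow}a$ a preblock, hence ${\uparrow}m={\uparrow}a$ and $m\lesssim a$. With that observation your (a)$\Rightarrow$(b) closes without any boundedness lemma, and boundedness then falls out of (b) as in your own (b)$\Rightarrow$(c).

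In (c)$\Rightarrow$(a) you invoke ``the minimal elements are exactly the $x$ with ${\approx}(x)$ a block'', citing Lemma~\ref{Lem:Minimal} transcribed to $\approx$. Transcribed, that lemma speaks about minimality with respect to the derived quasiorder ${\lesssim_{\approx}}$ (defined by ${\approx}(x)\subseteq{\approx}(y)$), not the given $\lesssim$, and for $\lesssim$ the equivalence fails even under hypothesis (c): on the two-element chain $a\lesssim b$ one has ${\approx}(b)=U$ a block while $b$ is not minimal, although $\mathcal{B}(\approx)=\{U\}$ is an irredundant covering and $(U,\lesssim)$ is bounded by minimal elements. So from ``every block is ${\approx}(x)$ for some $x$'' you cannot conclude $\mathcal{B}(\approx)=\{{\uparrow}m\mid m\text{ minimal}\}$ along the route you sketch. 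The paper gets this equality by a pinching argument instead: by Lemma~\ref{Lem:MinimalElements}(b) the family $\mathcal{H}=\{{\uparrow}m\mid m\text{ minimal}\}$ is an irredundant covering inducing $\approx$, by Lemma~\ref{Lem:IrredundantBlocks} $\mathcal{H}\subseteq\mathcal{B}(\approx)$, and since $\mathcal{B}(\approx)$ is assumed irredundant while $\mathcal{H}$ already covers $U$, necessarily $\mathcal{H}=\mathcal{B}(\approx)$. Your concluding step (any preblock lies in some block ${\uparrow}m$, so $m$ is a common lower bound) is fine once that equality has been established correctly.
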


\begin{proof} (a)$\Rightarrow$(b):  
Let $B$ be a block of $\approx$. Because $U$ has the Helly number 2, $B$ has a lower bound $m$. 
Thus, $B \subseteq {\uparrow}m$, and because $B$ is a block and ${\uparrow} m$ is a preblock, we have $B = {\uparrow} m$. 
This element $m$ is minimal, because if $a \lesssim m$, then ${\uparrow} m \subseteq {\uparrow} a$. Since ${\uparrow} m$ 
is a block and ${\uparrow} a$ is a preblock, we have  ${\uparrow} m = {\uparrow} a$. Thus, $a \in {\uparrow}m$ gives 
$m \lesssim a$ and $m$ is minimal. On the other hand, if $m$ is minimal, then ${\uparrow} m$ is a block by 
Lemma~\ref{Lem:MinimalElements}(a). Thus, $\mathcal{B}(\approx) = \{ {\uparrow} m \mid \text{$m$ is a minimal} \}$.

(b)$\Rightarrow$(c):
If $\mathcal{B}(\approx) = \{ {\uparrow} m \mid \text{$m$ is minimal}\}$, then for all
$x \in U$, there exists a minimal $m_x$ such that  $x \in {\uparrow} m_x$ and
$m_x \lesssim x$. Thus, $(U, \lesssim)$ is bounded by minimal elements. By Lemma~\ref{Lem:MinimalElements}(b), the
covering $\mathcal{B}(\approx)$ is irredundant.

(c)$\Rightarrow$(a):
Assume that $\mathcal{B}(\approx)$ is an irredundant covering and $(U,\lesssim)$ is bounded
by minimal elements. Then, by Lemma~\ref{Lem:MinimalElements}(b), $\mathcal{H} = \{ {\uparrow} m \mid \text{$m$ is minimal}\}$ 
is an irredundant covering inducing $\approx$. Lemma~\ref{Lem:IrredundantBlocks} implies $\mathcal{H} \subseteq \mathcal{B}(\approx)$. 
But since also $\mathcal{B}(\approx)$ is irredundant, we must have $\mathcal{H} = \mathcal{B}(\approx)$. 
Suppose $P$ is a $\approx$-preblock. Then, there exists a block $B$ of $\approx$ such that $P \subseteq B \in \mathcal{H}$.
Because  $\mathcal{H}$ consists of such upsets ${\uparrow} m$ that $m$ is minimal, there
exists a minimal $m$ such that $B = {\uparrow m}$. Thus, $m$ is a lower bound of $B$ and $P$, 
and the Helly number of $(U,\lesssim)$ is 2.
\end{proof}

In the following, we characterise for finite quasiordered sets the cases in which their Helly number is 2. If $(U,\lesssim)$
is a finite quasiordered set, then $(U,\lesssim)$ has at least one minimal element. This can be seen as follows.
Let $a_1$ be any element of $U$. If $a_1$ is not minimal, then there exists an element $a_2$ such
that $a_2 \lesssim a_1$, but $a_2 \not \gtrsim a_1$. If $a_2$ is minimal, then we are finished.  If $a_2$
is not minimal, then there is $a_3$ such that $a_3 \lesssim a_2$ and $a_3 \not \gtrsim a_2$. 
If $a_3$ is not minimal, continue this procedure. Since $A$ is finite, this process must terminate.

\begin{proposition} \label{Prop:FiniteHelly}
Let $(U,\lesssim)$ be a finite quasiordered set. The following are equivalent:
\begin{enumerate}[\rm (a)]
 \item The Helly number of  $(U,\lesssim)$ is 2.
 \item If for some minimal elements $a_1,a_2,a_3 \in U$, the intersections ${\uparrow} a_1 \cap {\uparrow}a_2$, 
 ${\uparrow} a_1 \cap {\uparrow}a_3$,  ${\uparrow} a_2 \cap {\uparrow}a_3$ are nonempty, then there exists
 a minimal element $a \in U$ such that
 \[ ({\uparrow} a_1 \cap {\uparrow}a_2) \cup ({\uparrow} a_1 \cap {\uparrow}a_3) \cup ({\uparrow} a_2 \cap {\uparrow}a_3)
  \subseteq {\uparrow} a.
 \]
\end{enumerate} 
\end{proposition}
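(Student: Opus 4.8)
The statement is about finite quasiordered sets, so I expect to exploit the fact (proved just before the proposition) that finite quasiordered sets always have minimal elements, together with the descending-chain argument that bounds every element below by a minimal one. The implication (a)$\Rightarrow$(b) should be routine: assuming the Helly number is $2$, the three pairwise intersections being nonempty means any two of $a_1,a_2,a_3$ have a common upper bound, hence (since the quasiorder is finite we can replace ``upper bound of two minimal elements'' by the stronger statement that the corresponding upsets meet) the three-element set $\{a_1,a_2,a_3\}$ is covered by the Helly hypothesis applied to the relation $\gtrsim$ --- more precisely, I first note that the union $S := ({\uparrow}a_1\cap{\uparrow}a_2)\cup({\uparrow}a_1\cap{\uparrow}a_3)\cup({\uparrow}a_2\cap{\uparrow}a_3)$ is a $\approx$-preblock (any two of its elements lie in a common upset ${\uparrow}a_i$, so are $\approx$-related), whence by the Helly-number-$2$ characterisation ``every preblock of $\approx$ has a lower bound'' $S$ has a lower bound, and finally I push that lower bound down to a minimal element $a$ using the descending chain argument; since $a\lesssim$ every element of $S$ we get $S\subseteq{\uparrow}a$.

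The substantive direction is (b)$\Rightarrow$(a). Here the plan is to verify the criterion ``every preblock of $\approx$ has a lower bound.'' Let $P$ be a $\approx$-preblock. Since $U$ is finite, it suffices to handle finite $P$, and I will argue by induction on $|P|$. The base cases $|P|\le 2$ are immediate: a single element is its own lower bound, and if $P=\{x,y\}$ with $x\approx y$ then by definition of $\approx = {\gtrsim}\circ{\lesssim}$ there is $a$ with $a\lesssim x$ and $a\lesssim y$, and then a minimal element below $a$ is a lower bound of $P$. For the inductive step, given $P=\{x_1,\dots,x_n\}$ with $n\ge 3$, I apply the induction hypothesis to the three $(n-1)$-element subsets obtained by deleting one point (each is still a preblock), and also --- crucially --- I need that for each $x_i$ there is a minimal element $m_i$ with $m_i\lesssim x_i$; replacing $x_i$ by $m_i$ does not change $\approx$-relatedness since $m_i\lesssim x_i$ gives ${\uparrow}x_i\subseteq{\uparrow}m_i$ and one checks $\{m_1,\dots,m_n\}$ is still a preblock and a common lower bound of it is a common lower bound of $P$. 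So WLOG all $x_i$ are minimal. Now with $n\ge 3$ I want to reduce to three minimal elements: actually the cleanest route is to first treat $n=3$ directly via hypothesis (b), then induct.

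For the case $n=3$ with $a_1,a_2,a_3$ minimal and pairwise $\approx$-related: pairwise $\approx$-relatedness of minimal elements means precisely that the three pairwise intersections ${\uparrow}a_i\cap{\uparrow}a_j$ are nonempty (by Corollary~\ref{Cor:Upset}-type reasoning, or directly: $a_i\approx a_j$ iff some $c$ has $c\lesssim a_i,a_j$ iff, since $a_i,a_j$ minimal, $a_i\gtrsim c\lesssim a_j$ forces $a_i\lesssim c$, i.e.\ $c\in{\uparrow}a_i\cap{\uparrow}a_j$), so (b) gives a minimal $a$ with the big union contained in ${\uparrow}a$; in particular $a\lesssim a_1$ (note $a_1\in{\uparrow}a_1\cap{\uparrow}a_2$ since $a_1\approx a_2$ means that intersection is nonempty — wait, I must instead pick witnesses), so $a$ is a common lower bound of $\{a_1,a_2,a_3\}$: indeed $a$ lies below every element of each ${\uparrow}a_i\cap{\uparrow}a_j$, and each such intersection contains an element $\gtrsim a_i$ hence... the clean statement I want is simply that ${\uparrow}a\supseteq$ the union forces $a\lesssim a_i$, which holds because each ${\uparrow}a_i\cap{\uparrow}a_j\ne\emptyset$ picks $c_{ij}$ with $a_i\lesssim c_{ij}$ and $c_{ij}\in{\uparrow}a$ so $a\lesssim c_{ij}$ — this does not immediately give $a\lesssim a_i$. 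The fix: strengthen the observation to $a_i\in{\uparrow}a_i\cap{\uparrow}a_j$ itself — this is exactly where minimality is used, since $a_i\approx a_j$ with both minimal forces $a_i\gtrsim c\lesssim a_j$, minimality of $a_i$ gives $a_i\lesssim c\lesssim a_j$ so $a_i\in{\uparrow}a_j$, hence $a_i\in{\uparrow}a_i\cap{\uparrow}a_j\subseteq{\uparrow}a$, i.e.\ $a\lesssim a_i$. Good. Then for $n>3$, replace all $x_i$ by minimal elements as above, pick a common lower bound of $\{x_1,x_2,x_3\}$ using the $n=3$ case, call it $b$ (minimal, after pushing down); note $b\approx x_j$ for all $j\ge 4$ since $b\lesssim x_1$ and $x_1\approx x_j$... no, that gives $b$ and $x_j$ both $\gtrsim$ something only if — actually $b\lesssim x_1$ and $x_1\approx x_j$ means $x_1\gtrsim c\lesssim x_j$; I don't get $b\approx x_j$ directly. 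Instead: form $\{b, x_4,\dots,x_n\}$ of size $n-2$; I claim it is a preblock, which follows once I know $b\approx x_j$ for each $j\ge4$. Hmm, this needs care — so the real induction should keep the three ``old'' minimal elements around: apply IH to $\{x_1,\dots,\hat{x_i},\dots,x_n\}$ for $i=1,2,3$ getting minimal lower bounds $b_1,b_2,b_3$; then $\{b_1,b_2,b_3\}$ — is it a preblock? $b_1\lesssim x_2$ and $b_2\lesssim x_1$... not obviously related.

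\textbf{The main obstacle}, then, is exactly this inductive bookkeeping: reducing a size-$n$ preblock to the size-$3$ situation that hypothesis (b) directly addresses. The honest resolution is to recognize that hypothesis (b), being about three minimal elements, is equivalent to the Helly-type statement for three elements, and then to invoke the general principle (the finite Helly-number argument, essentially a classical fact) that the Helly number being $\le 3$ together with every \emph{pair} having a lower bound forces Helly number $2$ — but that is false in general, so instead I should argue: (b) says that whenever three minimal elements are pairwise compatible they have a common lower bound (taking $a_i$ as witnesses in their own intersections as shown); combined with the pairwise case, a straightforward induction on $|P|$ for minimal-element preblocks $P$ works because deleting one point from $P$ and applying IH gives a minimal lower bound, and then I apply (b) not to three points of $P$ but to the point I deleted together with two cleverly chosen lower bounds from sub-preblocks — I will need to set this up so that at each stage the three minimal elements fed into (b) are genuinely pairwise $\approx$-related, which I expect to require proving a small lemma: if $m$ is minimal and $m\lesssim x$ and $x\approx y$ with $y$ minimal, then $m\approx y$ (proof: $m\lesssim x\gtrsim c\lesssim y$, and... this still needs $m\lesssim c$, which fails). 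Given this difficulty I anticipate the cleanest correct writeup restricts to showing (b) implies every preblock has a lower bound by a direct argument: take a maximal preblock (block) $B$; if $B$ had no lower bound, consider minimal elements below points of $B$ and derive a contradiction with (b) applied to a suitable triple — I will finalize this triple-selection in the actual proof, and that selection is the crux.
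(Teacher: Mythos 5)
Your direction (a)$\Rightarrow$(b) is fine and is essentially the paper's argument: the union of the three pairwise intersections is a preblock of $\approx$ (any two of its members lie in a common ${\uparrow} a_i$), the Helly-number-$2$ hypothesis then gives it a lower bound, and finiteness lets you push that bound down to a minimal element whose upset contains the union.

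The substantive direction (b)$\Rightarrow$(a), however, is not proved in your proposal, and you say so yourself: you try deleting one point of the preblock, replacing its points by minimal elements, and taking minimal lower bounds $b_1,b_2,b_3$ of the three one-point deletions, you correctly observe that none of these produces a triple of pairwise compatible minimal elements to which (b) can be applied (and that the auxiliary lemma you would need is false), and you leave the ``triple-selection'' unresolved. That selection is exactly the missing idea, and the paper's induction uses a different decomposition which avoids the difficulty entirely: given a preblock $X$ with $|X|=n+1\geq 3$, do not delete single points but \emph{partition} $X$ into three disjoint nonempty parts $X_1,X_2,X_3$ and apply the induction hypothesis to the three unions $X_1\cup X_2$, $X_1\cup X_3$, $X_2\cup X_3$, each a preblock of size at most $n$. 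Their lower bounds can be pushed down by finiteness to minimal elements $a_1,a_2,a_3$, where $a_1$ is below every element of $X_1\cup X_2$, $a_2$ below every element of $X_1\cup X_3$, and $a_3$ below every element of $X_2\cup X_3$. Then $X_1\subseteq{\uparrow} a_1\cap{\uparrow} a_2$, $X_2\subseteq{\uparrow} a_1\cap{\uparrow} a_3$, $X_3\subseteq{\uparrow} a_2\cap{\uparrow} a_3$, and these intersections are nonempty simply because the parts $X_i$ are nonempty; hypothesis (b) now supplies one minimal $a$ with $X\subseteq{\uparrow} a$, i.e.\ a lower bound of $X$. Note that with this set-up there is no need for the three minimal elements to be pairwise $\approx$-related in advance -- the nonemptiness required by (b) comes for free from the partition -- which is precisely the obstruction that blocked your versions of the induction. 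Your alternative fallback (a contradiction argument on a block with no lower bound) is likewise only sketched, so as it stands the proposal has a genuine gap in (b)$\Rightarrow$(a).
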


\begin{proof}
(a)$\Rightarrow$(b): 
Let $a_1$, $a_2$, $a_3$ be minimal elements and 
let $H =  ({\uparrow} a_1 \cap {\uparrow}a_2) \cup ({\uparrow} a_1 \cap {\uparrow}a_3) \cup ({\uparrow} a_2 \cap {\uparrow}a_3)$.
If $x,y \in H$, then there exists $a_i$ for some $1 \leq i \leq 3$ such that $x \gtrsim a_i \lesssim y$ and $x \approx y$. This means
that $H$ is a preblock, and since (a) holds, $H$ has a lower bound $b$. Because $(U,\lesssim)$ is finite, there exists a minimal element
$a$ such that $a \lesssim b$. Hence, $H \subseteq {\uparrow} a$.

(b)$\Rightarrow$(a): We show by induction that each preblock $X$ of $\approx$ has a lower bound. Let $X$ be a preblock.
If $|X| = 1$, then the only element in $X$ is trivially a lower bound of $X$. If $|X| = 2$, then $X = \{x,y\}$
for some $x,y \in U$, and $x \approx y$ means that there is $a \in U$ such that $a \lesssim x,y$. Thus, $a$ is a lower
bound for $X$.  As an induction hypothesis, we assume that each preblock $X$  of $\approx$  such that $|X| \leq n$ has a lower bound.
Let us consider a preblock $X$ such that $|X| = n + 1$. Since $X$ has now at least three elements,
$X$ can be partitioned into three disjoint nonempty sets $X_1$, $X_2$, $X_3$. It is obvious that the sets $X_1 \cup X_2$, 
$X_1 \cup X_3$, $X_2 \cup X_3$ are preblocks having at most $n$ elements. By the induction hypothesis, these preblocks have
lower bounds. Since $(U,\lesssim)$ is finite, there exists minimal elements $a_1, a_2, a_3 \in U$ such that $a_1 \lesssim x_1$
for all $x_1 \in X_1 \cup X_2$, $a_2 \lesssim x_2$ for all $x_2 \in X_1 \cup X_3$, and $a_3 \lesssim x_3$
for all $x_3 \in X_2 \cup X_3$. This implies that
\[ X_1 \subseteq ({\uparrow} a_1 \cap {\uparrow}a_2),\  X_2 \subseteq ({\uparrow} a_1 \cap {\uparrow}a_3), \ 
  X_3 \subseteq ({\uparrow} a_2 \cap {\uparrow}a_3)
\]
Because the intersections ${\uparrow} a_1 \cap {\uparrow}a_2$, ${\uparrow} a_1 \cap {\uparrow}a_3$,  
${\uparrow} a_2 \cap {\uparrow}a_3$ are nonempty and (b) holds, there exists a minimal element $a$ such that
\[ X = X_1 \cup X_2 \cup X_3 \subseteq ({\uparrow} a_1 \cap {\uparrow}a_2) \cup ({\uparrow} a_1 \cap {\uparrow}a_3) 
 \cup ({\uparrow} a_2 \cap {\uparrow}a_3) \subseteq {\uparrow} a.
\]
Thus, the preblock $X$ has a lower bound $a$.
\end{proof}

We will call a covering $\mathcal{H}$ of a set $U$ \emph{normal} if it satisfies the conditions (ii) and (iv) from 
page 19 of the book \cite{Chajda91}, that is, 

\begin{enumerate}[({N}1)]
\item $\mathcal{H}$ is an antichain with respect to $\subseteq$.

\item If $M \subseteq U$ is a set such that $M \nsubseteq B$ for all
$B \in \mathcal{H}$, then there exists at least one two-element set $\{a,b\}\subseteq M$
such that $\{a,b\} \nsubseteq B$ for all $B \in \mathcal{H}$.
\end{enumerate}

Theorem 2.9 of \cite{Chajda91} (see also \cite{Chajda76, Czedli82, Czedli83, Gratzer89}) states that a covering $\mathcal{H}$ of $U$ is normal if and only if 
there is a tolerance $R$ on $U$ such that $\mathcal{H} = \mathcal{B}(R)$. For a normal covering $\mathcal{H}$, this $R$ is 
just the tolerance induced by $\mathcal{H}$. Thus, there is a one-to-one correspondence
between  tolerances on $U$ and normal coverings on $U$. Note also that in \cite{Zelinka77},
normal coverings, and their relationship to cliques of a graph, are studied under the name $\tau$-coverings. 
As an immediate consequence of \cite{Chajda91} and \cite{Zelinka77}, we obtain:

\begin{remark} \label{Rem:Normal}
Let $\mathcal{H}$ be a normal covering of $U$. If for some $B_{1}, B_{2}, B_{3} \in \mathcal{H}$, 
the intersections $B_{1}\cap B_{2}$, $B_{1}\cap B_{3}$, and $B_{2} \cap B_{3}$ are nonempty, then
there exists $B \in \mathcal{H}$ that includes these intersections. Indeed, 
let $C = (B_{1} \cap B_{2}) \cup (B_{1} \cap B_{3}) \cup (B_{2} \cap B_{3})$. Then, for any
$\{a,b\} \subseteq C$, we have $\{a,b\} \subseteq B_{i}$ for some $i\in\{1,2,3\}$. 
Since $\mathcal{H}$ is a normal covering of $U$, $C \nsubseteq B$ for all $B \in \mathcal{H}$ is not possible. 
Therefore, there exists $B\in \mathcal{H}$ that includes $C$.
\end{remark}

In Lemma~\ref{Lem:IrredundantBlocks}, we showed that if $R$ is a tolerance induced by an irredundant covering $\mathcal{H}$, 
then $\mathcal{H} \subseteq \mathcal{B}(R)$. The natural question then is, when such a $\mathcal{H}$ coincides with
$\mathcal{B}(R)$? An obvious answer is that exactly in the case the covering $\mathcal{H}$ is normal. Next we present a theorem
that connects normal coverings, Helly numbers of 2, and the condition given in Remark~\ref{Rem:Normal} for
tolerances induced by an irredundant finite covering. For that,
we need to introduce some notation. Let $R$ be a tolerance on $U$. We denote by
\[ \mathcal{N} = \{ R(x) \mid x \in U\} \]
the set of all $R$-neighbourhoods of the elements of $U$. This set can be ordered by the set-inclusion $\subseteq$. Thus,
$(\mathcal{N},\subseteq)$ is a partially ordered set. Observe that 
$x \lesssim_R y$ \ in $U$ if and only if $R(x) \subseteq R(y)$ in $\mathcal{N}$. The following conditions are now obvious:
\begin{itemize}
 \item The element $m \in U$ is a minimal element of $(U,\lesssim_R)$ if and only if $R(m)$ is a minimal element of $(\mathcal{N},\subseteq)$.
 \item The Helly number of $(U,\lesssim_R)$ is 2 if and only if the Helly number of $(\mathcal{N},\subseteq)$ is 2.
\end{itemize}

If $R$ is a tolerance induced by a finite irredundant covering $\mathcal{H}$ of $U$, then $\mathcal{N}$ is
finite. Indeed, 
\[ \mathcal{H} = \{ R(m) \mid \text{$m$ is minimal} \} = \{ R(m) \mid \text{$R(m)$ is an atom of $\wp(U)^\UP$} \} \]
and hence $\wp(U)^\UP$ is finite, because it is generated by a finite set $\mathcal{H}$ of atoms. Therefore, $\mathcal{N} \subseteq \wp(U)^\UP$
must be finite.

\begin{proposition}\label{Prop:Main}
Let $R$ be a tolerance induced by a finite irredundant covering $\mathcal{H}$ of $U$. Then, the following assertions are equivalent:
\begin{enumerate}[\rm (i)]
\item $\mathcal{H} = \mathcal{B}(R)$.

\item $\mathcal{H}$ is a normal covering.

\item If for some $B_{1}, B_{2}, B_{3} \in \mathcal{H}$, the intersections $B_{1} \cap B_{2}$, $B_{1} \cap B_{3}$, 
and $B_{2} \cap B_{3}$ are nonempty, then there exists $B \in \mathcal{H}$ that includes these intersections.

\item The Helly number of $(U,\lesssim _R)$ is 2.
\end{enumerate}
\end{proposition}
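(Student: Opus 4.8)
The plan is to prove Proposition~\ref{Prop:Main} as a cycle of implications, drawing the individual arrows from the machinery already set up in the excerpt. The key preliminary observation is that, because $R$ is induced by the irredundant covering $\mathcal{H}$, Theorem~\ref{Thm:Characterization} and Lemma~\ref{Lem:Minimal} apply: $(U,\lesssim_R)$ is bounded by minimal elements, $R = {\approx_R}$, the minimal elements are exactly the $d$ with $R(d)$ a block, and $\mathcal{H} = \{{\uparrow}m \mid m \text{ minimal in } (U,\lesssim_R)\} = \{R(m) \mid R(m) \text{ is a block}\}$ (identifying ${\uparrow}m$ under $\lesssim_R$ with $R(m)$, as the paragraph before the proposition notes via $\mathcal{N}$). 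So throughout, the ambient quasiorder we work with is $\lesssim_R$, its associated tolerance $\approx$ is $R$ itself, and $\mathcal{H}$ is precisely the family of upsets of minimal elements.

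First I would do (i)$\Rightarrow$(ii): if $\mathcal{H} = \mathcal{B}(R)$, then $\mathcal{H}$ is the block set of a tolerance, so by Theorem~2.9 of \cite{Chajda91} (quoted in the excerpt) it is a normal covering. Next, (ii)$\Rightarrow$(iii) is immediate from Remark~\ref{Rem:Normal}, which is stated for any normal covering. For (iii)$\Rightarrow$(iv) I would invoke Proposition~\ref{Prop:FiniteHelly}: since $\mathcal{N} = \{R(x) \mid x \in U\}$ is finite (as argued in the paragraph preceding the proposition, $\wp(U)^\UP$ is generated by the finite atom set $\mathcal{H}$), $(U,\lesssim_R)$ is a finite quasiordered set, so the Helly number being $2$ is equivalent to condition~(b) of Proposition~\ref{Prop:FiniteHelly}. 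That condition, phrased in terms of upsets ${\uparrow}a_1,{\uparrow}a_2,{\uparrow}a_3$ of minimal elements and a covering minimal $a$, is literally condition~(iii) here once we recall $\mathcal{H} = \{{\uparrow}m \mid m \text{ minimal}\}$ and that every $B \in \mathcal{H}$ is such an upset ${\uparrow}a$ with $a$ minimal — so I would simply translate between the two formulations (taking care that the minimal element $a$ whose upset contains the union is itself in $U$ and hence ${\uparrow}a \in \mathcal{H}$). Finally, (iv)$\Rightarrow$(i): if the Helly number of $(U,\lesssim_R)$ is $2$, then by Theorem~\ref{Thm:HellyCharacterisation} we get $\mathcal{B}(\approx_R) = \mathcal{B}(R) = \{{\uparrow}m \mid m \text{ minimal}\} = \mathcal{H}$, which is exactly~(i).

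The step I expect to need the most care is (iii)$\Rightarrow$(iv), not because it is deep but because it requires bookkeeping: one must check that the $\mathcal{N}$-reformulation of the Helly number and the minimal elements (spelled out in the bullet points just before the proposition) genuinely lets Proposition~\ref{Prop:FiniteHelly} be applied to $(\mathcal{N},\subseteq)$, and then that the resulting purely order-theoretic condition~(b) matches the set-theoretic condition~(iii) under the dictionary $m \leftrightarrow R(m) = {\uparrow}m$, $B_i = {\uparrow}a_i$. In particular one should confirm that in~(iii) the hypothesis "$B_1,B_2,B_3 \in \mathcal{H}$ with pairwise nonempty intersections" corresponds exactly to "minimal $a_1,a_2,a_3$ with ${\uparrow}a_i \cap {\uparrow}a_j \neq \emptyset$", which holds because every element of $\mathcal{H}$ is the upset of a minimal element and conversely. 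Everything else is a direct citation of an earlier numbered result, so the proof is short; I would write it as the four implications (i)$\Rightarrow$(ii)$\Rightarrow$(iii)$\Rightarrow$(iv)$\Rightarrow$(i) with the finiteness of $\mathcal{N}$ recalled at the point where Proposition~\ref{Prop:FiniteHelly} is used.
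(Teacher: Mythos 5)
Your proposal is correct and follows essentially the same route as the paper: (i)$\Leftrightarrow$(ii) via Chajda's Theorem~2.9, (ii)$\Rightarrow$(iii) via Remark~\ref{Rem:Normal}, (iii)$\Rightarrow$(iv) via Proposition~\ref{Prop:FiniteHelly} applied through the finite poset $(\mathcal{N},\subseteq)$, and (iv)$\Rightarrow$(i) via Theorem~\ref{Thm:HellyCharacterisation} together with $R={\approx_R}$ from Theorem~\ref{Thm:Characterization}. The only cosmetic difference is that in (iv)$\Rightarrow$(i) you invoke the identity $\mathcal{H}=\{{\uparrow}m \mid m \text{ minimal}\}$ directly, whereas the paper concludes $\mathcal{H}=\mathcal{B}(R)$ from $\mathcal{H}\subseteq\mathcal{B}(\approx_R)$ (Lemma~\ref{Lem:IrredundantBlocks}) and the irredundancy of both coverings; both arguments are sound.
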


\begin{proof} The equivalence (i)$\Leftrightarrow$(ii) is an immediate consequence
of \cite[Theorem~2.9]{Chajda91} as we already noted, and the implication (ii)$\Rightarrow$(iii)
is clear by Remark~\ref{Rem:Normal}.

(iii)$\Rightarrow$(iv): Let $R(a_{1})$, $R(a_{2})$, and $R(a_{3})$ be minimal
elements in $\mathcal{N}$ such that the intersections $R(a_{1})\cap R(a_{2})$,
$R(a_{1})\cap R(a_{3})$, and $R(a_{2})\cap R(a_{3})$ are nonempty. Then, the
elements $a_{1}$, $a_{2}$, and $a_{3}$ are minimal in $(U,\lesssim_{R})$ and
${\uparrow} a_{1}$, ${\uparrow} a_{2}$, and ${\uparrow} a_{3}$ belong to $\mathcal{H}$,
since $\mathcal{H}$ is an irredundant covering. We have that the
intersections ${\uparrow} a_{1} \cap {\uparrow} a_{2}$, ${\uparrow} a_{1} \cap {\uparrow} a_{3}$, 
and ${\uparrow} a_{2}\cap {\uparrow} a_{3}$ are nonempty, which by
assumption (iii) implies that there exists a minimal $a$ in $(U,\lesssim_{R})$
such that ${\uparrow} a \in \mathcal{H}$ includes the intersections ${\uparrow} a_{1} \cap {\uparrow} a_{2}$, 
${\uparrow} a_{1} \cap {\uparrow} a_{3}$, and ${\uparrow} a_{2} \cap {\uparrow} a_{3}$. Therefore, 
by Proposition~\ref{Prop:FiniteHelly}, the Helly number of $(U,\lesssim_{R})$ is two.

\noindent(iv)$\Rightarrow$(i). Assume that (iv) holds. Then, by Theorem~\ref{Thm:HellyCharacterisation},
the blocks of the tolerance $\approx_{R}$ form an irredundant covering of
$U$ and $\mathcal{B}(\approx_{R}) = \{ {\uparrow} m \mid m \text{ is minimal}\}$.
Since by assumption, $R$ is induced by an irredundant covering $\mathcal{H}$, 
Theorem~\ref{Thm:Characterization} implies that $R$ equals $\approx_{R}$. 
By Lemma~\ref{Lem:IrredundantBlocks}, we have $\mathcal{H} \subseteq \mathcal{B}(R) = \mathcal{B}(\approx_R)$.
Because both $\mathcal{H}$ and $\mathcal{B}(\approx_R)$ are irredundant coverings inducing $R$,
we get $\mathcal{H} = \mathcal{B}(R)$.
\end{proof}

\begin{example}\label{Exa:Helly}
Let us consider the tolerance on $U = \{a,b,c,d,e,f,g\}$ given in Figure~\ref{Fig:figure2}. The corresponding quasiordered set
$(U, \lesssim_R)$ is bounded by minimal elements $a$, $c$, $g$. In addition, the tolerance $R$ coincides with the relation
$\approx_R$ and thus $R$ is induced by an irredundant covering $\mathcal{H} = \{ {\uparrow} a, {\uparrow} c, {\uparrow} g\}$.

\begin{figure}[h]
\centering
\includegraphics[width=120mm]{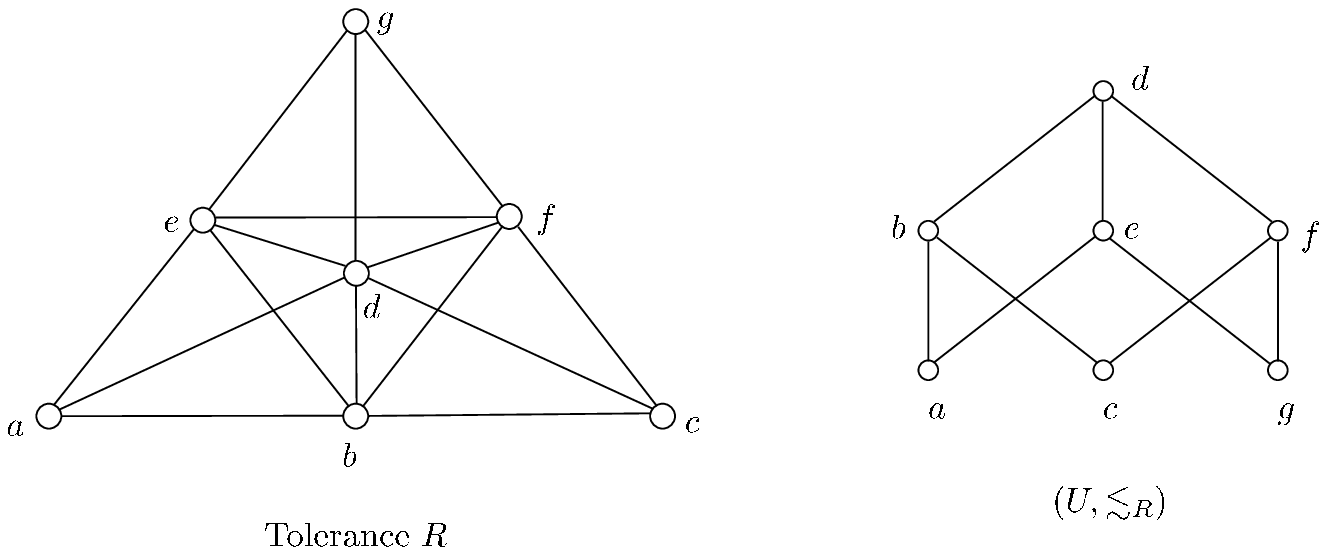}
\caption{\label{Fig:figure2}}
\end{figure}
We have that the following intersections of elements in $\mathcal{H}$ are nonempty:
\[ 
{\uparrow} a \cap {\uparrow} c = \{b,d\},  {\uparrow} a \cap {\uparrow} g = \{d,e\}, \text{ and } {\uparrow} c \cap {\uparrow} g = \{d,f\}.
\]
Obviously, there is no $B \in \mathcal{H}$ that includes the union $\{b,d,e,f\}$ of the above intersections. By Proposition~\ref{Prop:Main},
we have $\mathcal{H} \subset \mathcal{B}(R)$. In fact, $\{b,d,e,f\}$ is the only block of $R$ not belonging to $\mathcal{H}$.
\end{example}

It is also interesting to notice that the quasiordered set $(U,\lesssim_R)$ of Figure~\ref{Fig:figure2} is almost isomorphic to the
Boolean lattice $\mathbf{2^3}$ except that the least element is missing. Next we consider the tolerances induced by finite distributive
lattices. Let $\mathcal{L} = (L,\leq)$ be a lattice with a least element $0$. We denote the set $L\setminus\{0\}$ by $L^+$. We say that the
Helly number of the lattice $\mathcal{L}$ is $k$, if the partially ordered set ($L^{+},\leq)$ has the Helly number $k$.

\begin{corollary} \label{Cor:DistributiveLattice}
If $\mathcal{L} = (L,\leq)$ is a finite distributive lattice, then the Helly number of $\mathcal{L}$ is 
$2$ if and only if $\mathcal{L}$ has at most two different atoms.
\end{corollary}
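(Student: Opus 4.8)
The plan is to translate the Helly number of $\mathcal{L}$ — by definition the Helly number of the poset $(L^{+},\le)$ — into lattice language and then handle the two implications separately. Since $\mathcal{L}$ is finite, I would use the reformulation recorded just after the definition of the Helly number of a quasiordered set: $(L^{+},\le)$ has Helly number $2$ iff every preblock of $\approx$ (on $L^{+}$) has a lower bound in $L^{+}$. In $(L^{+},\le)$ one has $x \approx y$ iff $x$ and $y$ have a common lower bound in $L^{+}$, i.e.\ iff $x \wedge y \neq 0$; so the preblocks of $\approx$ are exactly the nonempty $X \subseteq L^{+}$ with $x \wedge y \neq 0$ for all $x,y \in X$, and such an $X$ has a lower bound in $L^{+}$ iff $\bigwedge X \neq 0$. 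Thus the goal becomes: $\bigwedge X \neq 0$ for every nonempty $X \subseteq L^{+}$ with pairwise nonzero meets, if and only if $\mathcal{L}$ has at most two atoms. Throughout I would use that in a finite lattice every nonzero element lies above some atom, and that the minimal elements of $(L^{+},\le)$ are precisely the atoms of $\mathcal{L}$.

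For ``at most two atoms $\Rightarrow$ Helly number $2$'': if $L=\{0\}$ the claim is trivial; if $\mathcal{L}$ has a unique atom $p$, then every element of $L^{+}$ lies above $p$, so $\bigwedge X \geq p \neq 0$. Suppose $\mathcal{L}$ has exactly two atoms $p_{1},p_{2}$ and let $X \subseteq L^{+}$ have pairwise nonzero meets. If some $x \in X$ satisfies $x \not\geq p_{2}$ and some $y \in X$ satisfies $y \not\geq p_{1}$, then $x \wedge y \neq 0$ lies above an atom, hence above $p_{1}$ or above $p_{2}$; but $x \wedge y \geq p_{1}$ forces $y \geq p_{1}$ and $x \wedge y \geq p_{2}$ forces $x \geq p_{2}$, a contradiction either way. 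Hence every element of $X$ lies above $p_{1}$, or every element of $X$ lies above $p_{2}$, and in both cases $\bigwedge X \neq 0$. Note this direction uses only finiteness, not distributivity.

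For ``Helly number $2 \Rightarrow$ at most two atoms'' I would argue by contraposition. Assume $\mathcal{L}$ has three distinct atoms $p_{1},p_{2},p_{3}$ and set $x_{ij} = p_{i} \vee p_{j}$. By distributivity and $p_{i}\wedge p_{j}=0$ for $i\neq j$ one gets $x_{12}\wedge x_{13} = p_{1}\vee(p_{2}\wedge p_{3}) = p_{1}$, and likewise $x_{12}\wedge x_{23}=p_{2}$ and $x_{13}\wedge x_{23}=p_{3}$, all nonzero; moreover $x_{12},x_{13},x_{23}$ are pairwise distinct (from $x_{12}=x_{13}$ one would get $p_{2}=x_{12}\wedge p_{2}=x_{13}\wedge p_{2}=0$). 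So $\{x_{12},x_{13},x_{23}\}$ is a three-element preblock of $\approx$. However $x_{12}\wedge x_{13}\wedge x_{23} = p_{1}\wedge(p_{2}\vee p_{3}) = (p_{1}\wedge p_{2})\vee(p_{1}\wedge p_{3}) = 0$, so this preblock has no lower bound in $L^{+}$, and hence the Helly number is not $2$. An alternative to this paragraph is to invoke Proposition~\ref{Prop:FiniteHelly} for the finite quasiordered set $(L^{+},\le)$, whose minimal elements are the atoms and in which ${\uparrow}p_{i}\cap{\uparrow}p_{j}={\uparrow}(p_{i}\vee p_{j})$: its condition (b) applied to $p_{1},p_{2},p_{3}$ would demand an atom $a$ with $a \le x_{12}\wedge x_{13}\wedge x_{23}=0$, which is impossible.

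There is no serious obstacle here. The points needing care are: verifying that the pairwise-nonzero-meet hypothesis in the two-atom case genuinely forces $X$ to sit above a single atom (the small case analysis on where $x\wedge y$ lies), and, in the three-atom case, the distributive computation of the meets together with the distinctness of $x_{12},x_{13},x_{23}$ — without distinctness the ``preblock'' could collapse to two elements and would then automatically have a lower bound.
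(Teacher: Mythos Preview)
Your proof is correct. The key distributive identity
\[
(p_{1}\vee p_{2})\wedge(p_{1}\vee p_{3})\wedge(p_{2}\vee p_{3}) \;=\; (p_{1}\wedge p_{2})\vee(p_{1}\wedge p_{3})\vee(p_{2}\wedge p_{3}) \;=\; 0
\]
is the same in both arguments, but the packaging differs. The paper routes \emph{both} implications through Proposition~\ref{Prop:FiniteHelly}: for ``$\le 2$ atoms $\Rightarrow$ Helly $2$'' it simply observes that condition~(b) there is vacuous when at most two minimal elements are available, and for the converse it assumes Helly number $2$, takes three distinct atoms, and derives from condition~(b) an atom $a$ with $a\le 0$. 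Your primary argument bypasses Proposition~\ref{Prop:FiniteHelly} entirely: you work directly with the reformulation ``every preblock of $\approx$ has nonzero meet'', give an explicit two-atom case analysis for one direction, and for the other exhibit the concrete three-element preblock $\{p_{1}\vee p_{2},\,p_{1}\vee p_{3},\,p_{2}\vee p_{3}\}$ with pairwise nonzero meets and total meet $0$. (Your alternative paragraph \emph{is} the paper's proof.) What your route buys is a self-contained argument plus the extra observation---not made in the paper---that the direction ``$\le 2$ atoms $\Rightarrow$ Helly $2$'' needs only finiteness, not distributivity. One small remark: the distinctness of the $x_{ij}$ is actually automatic once you know the three pairwise meets are nonzero while the triple meet is $0$, so that verification, while correct, is not strictly needed.
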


\begin{proof} Because $\mathcal{L}$ is finite, it has a least element $0$,
and its atoms are precisely the minimal elements of $(L^{+},\leq)$.
If their number is one or two, then condition (b) of 
Proposition~\ref{Prop:FiniteHelly} is trivially
satisfied, and hence the Helly number of $(L^{+},\leq)$ and $\mathcal{L}$ is $2$.

Conversely, suppose that the Helly number of $\mathcal{L}$ is $2$.
Let us first recall the well-known facts that for any $x, y \in L$, 
${\uparrow} x \cap {\uparrow} y = {\uparrow} (x \vee y)$,  
and $x \geq y$ if and only if ${\uparrow} x \subseteq {\uparrow} y$. 

Let $a_{1},a_{2},a_{3}\in L$ be three distinct atoms of $\mathcal{L}$. 
First notice that they are minimal elements in $(L^{+},\leq)$ and that the upsets corresponding to these atoms can not 
be disjoint, because each of them contains the greatest element $1$ of the finite lattice $\mathcal{L}$.
By applying Proposition~\ref{Prop:FiniteHelly}, we get that there exists an atom $a\in L$ such that
${\uparrow} (a_{1} \vee a_{2}) \subseteq {\uparrow} a$,  
${\uparrow} (a_{1} \vee a_{3}) \subseteq {\uparrow} a$, and
${\uparrow} (a_{2} \vee a_{3}) \subseteq {\uparrow} a$.
Since $L$ is distributive and $a_{1},a_{2},a_{3}$ are atoms, this implies
 \[ a \leq (a_{1} \vee a_{2}) \wedge(a_{1} \vee a_{3}) \wedge (a_{2} \vee a_{3}) = 
    (a_{1} \wedge a_{2}) \vee(a_{1} \wedge a_{3}) \vee(a_{2}\wedge a_{3}) = 0,
 \]
a contradiction.
\end{proof}

We may now conclude that each finite distributive lattice $\mathcal{L} = (L,\leq)$ induces a tolerance $({\geq} \circ {\leq})$  
on $L^+$. Let us denote this tolerance by $\bowtie$. Because $L^+$ is finite, it is
trivially bounded by minimal elements. By Lemma~\ref{Lem:MinimalElements}, $\mathcal{H} = \{ {\uparrow} m \mid \text{$m$ is minimal} \}$
is an irredundant covering of $L^+$, and the tolerance induced by $\mathcal{H}$ is $\bowtie$. By Corollary~\ref{Cor:DistributiveLattice},
the Helly number of $\mathcal{L}$ is $2$ if and only if $\mathcal{L}$ has at most two different atoms. Finally, we obtain that
$\mathcal{B}(\bowtie) = \mathcal{H}$ if and only if $\mathcal{L}$ has at most two different atoms by Proposition~\ref{Prop:Main}.

\section*{Acknowledgements}
We thank the anonymous referees for their valuable remarks on our manuscript.

\bibliographystyle{fundam}

\begin{bibdiv}
\begin{biblist}

\bib{Alt78}{article}{
author = {Alt, James}, 
author = {Schofield, Norman}, 
title = {Clique analysis of a tolerance relation},
journal = {The Journal of Mathematical Sociology},
volume = {6}, 
date = {1978}, 
pages = {155--162}
}

\bib{Bartol2004}{article}{
author = {W. Bartol},
author = {J. Mir{\'o}}, 
author = {K. Pi{\'o}ro},
author = {F. Rossell{\'o}},
title = {On the coverings by tolerance classes},
journal = {Information Sciences},
volume = {166},
pages = {193--211},
date = {2004}
}

\bib{Chajda91}{book}{
author = {Chajda, I.},
title = {Algebraic Theory of Tolerance Relations},
publisher = {Monograph series of Palack{\'y} University},
date = {1991}
}

\bib{Chajda76}{article}{
author = {Chajda, I.}, 
author = {Niederle, J.}, 
author = {Zelinka, B.},
title = {On existence conditions for compatible tolerances}, 
journal = {Czechoslovak Mathematical Journal},
volume = {26}, 
date = {1976}, 
pages = {304--311}
}

\bib{Czedli82}{article}{
  title = {Factor lattices by tolerances},
  author = {Cz{\'e}dli, Gabor},
  journal = {Acta Scientiarum Mathematicarum (Szeged)},
  volume = {44},
  pages={35--42},
  date = {1982}
}

\bib{Czedli83}{article}{
author = {Cz{\'e}dli, G.},
author = {Klukovits, L.},
title = {A note on tolerances of idempotent algebras},
journal = {Glasnik Matemati\u{c}ki}, 
volume = {18},
date = {1983},
pages = {35--38}
}

\bib{ganter1999formal}{book}{
author = {Ganter, Bernhard},
author = {Wille, Rudolf},
title = {Formal concept analysis: Mathematical foundations},
publisher = {Springer},
address = {Berlin/Heidelberg},
date = {1999}
}

\bib{Gratzer89}{article}{
author = {Gr{\"a}tzer, G.},
author = {Wenzel, G.~H.},
title = {Tolerances, covering systems, and the axiom of choice},
journal = {Archivum Mathematicum},
volume = {25},
date = {1989},
pages = {27--34}
}

\bib{Harary57}{article}{
author = {Harary, F.}, 
author = {Ross, I.~C.}, 
title = {A procedure for clique detection using the group matrix},
journal = {Sociometry},
date = {1957},
volume = {20}, 
pages = {205--216}
}

\bib{Jarv07}{article}{
      author={J{\"a}rvinen, Jouni},
       title={Lattice theory for rough sets},
        date={2007},
     journal={Transactions on Rough Sets},
      volume={VI},
       pages={400--498},
}

\bib{JarvRade2014}{article}{
author = {J{\"a}rvinen, Jouni},
author = {Radeleczki, S{\'a}ndor},
title = {Rough sets determined by tolerances},
journal = {International Journal of Approximate Reasoning},
volume = {55}, 
date = {2014}, 
pages = {1419--1438}
}

\bib{Niemine78}{article}{
author = {Nieminen, J.},
title = {Screens and rounding mappings},
journal = {Zeitschrift f{\"u}r Angewandte Mathematik und Mechanik},
volume = {58},
date = {1978}, 
pages = {519--520}
}

\bib{Niemine79}{article}{
author = {Nieminen, J.},
title = {Blocks, error algebras and flou sets}, 
journal = {Glasnik Matemati\u{c}ki}, 
volume = {14},
year = {1979},
pages = {381--385}
}

\bib{Pogonowski81}{book}{
author = {Pogonowski, J.},
title = {Tolerance spaces with applications to linguistics}, 
publisher = {University Press, Institute of Linguistics, Adam Mickiewicz University}, 
address = {Pozna\'{n}},
date={1981}
}

\bib{Schreider75}{book}{
author = {Schreider, Ju.~A.},
title = {Equality, Resemblance, and Order},
publisher = {Mir Publishers},
address = {Moskow},
date = {1975}
}

\bib{Yao2012}{article}{
      author={Yao, Yiyu},
      author={Yao, Bingxue},
       title={Covering based rough set approximations},
        date={2012},
     journal={Information Sciences},
      volume={200},
       pages={91--107},
}

\bib{Zeeman62}{article}{
      author={Zeeman, E.~C.},
       title={The topology of the brain and visual perception},
       book={
         editor={Fort, M.~K.},
         title={Topology of 3-Manifolds},
         publisher={Prentice-Hall},
         address={Englewood Cliffs, NJ},
         date={1962}
        }
}

\bib{Zelinka68}{article}{
author = {Zelinka, Bohdan}, 
title = {Tolerance graphs}, 
journal = {Commentationes Mathematicae Universitatis Carolinae},
volume = {9}, 
date = {1968}, 
pages = {121--131}
}
 
\bib{Zelinka77}{article}{ 
author = {Zelinka, Bohdan}, 
title = {A remark on systems of maximal cliques of a graph}, 
journal = {Czechoslovak Mathematical Journal},
volume = {27}, 
date = {1977}, 
pages= {617--618},
}
 
\end{biblist}
\end{bibdiv}

\end{document}